\let\expandafter\oldproof\csname\string\proof\endcsname
\let\oldendproof\endproof
\renewenvironment{proof}[1][\proofname]{
	\oldproof[#1:]\unskip}{\oldendproof\unskip}
\newtheoremstyle{break}
	{}
	{}
	{}
	{}
	{\bfseries}
	{}
	{\newline}
	{}
\newtheoremstyle{definitionbreak}
	{\parskip}
	{\parskip}
	{}
	{}
	{\bfseries}
	{}
	{\newline}
	{}
\theoremstyle{break}
\newtheorem{thm}[subsection]{Theorem}
\newtheorem{lemma}[subsection]{Lemma}
\newtheorem*{remark*}{Remark}
\newtheorem{introthm}{Theorem}
\theoremstyle{definitionbreak}
\newtheorem{defn}[subsection]{Definition}
\newtheorem*{notation*}{Notation}
\let\O\relax
\renewcommand{\epsilon}{\varepsilon}
\newcommand{\nin}{\notin}
\newcommand{\divides}{\bigm|}
\newcommand{\Fq}{\mathbb{F}_q}
\newcommand{\Fgp}{\mathbb{F}_q^*}
\DeclareMathOperator{\Inv}{Inv}
\DeclareMathOperator{\O}{O}
\DeclareMathOperator{\POmega}{P\Omega}
\DeclareMathOperator{\PSL}{PSL}
\DeclareMathOperator{\SL}{SL}
\DeclareMathOperator{\SO}{SO}
\DeclareMathOperator{\SOmega}{\Omega}
\DeclareMathOperator{\Span}{span}
\DeclarePairedDelimiter{\abs}{\lvert}{\rvert}
\DeclarePairedDelimiter{\gen}{\langle}{\rangle}
\begin{document}
\begin{center}
{\Large Maximal Cocliques in \(\PSL_2(q)\)}

{\large Jack Saunders\footnotemark} \footnotetext{This work was done as part of my PhD, thus I would like to thank my supervisor Corneliu Hoffman for everything he has done for me thus far.}

School of Mathematics, University of Birmingham, B15 2TT\vskip-0.8em
JPS675@bham.ac.uk
\end{center}

\begin{abstract}
The generating graph of a finite group is a structure which can be used to encode certain information about the group. It was introduced by Liebeck and Shalev and has been further investigated by Lucchini, Mar\'oti, Roney-Dougal and others. We investigate maximal cocliques (totally disconnected induced subgraphs of the generating graph) in \(\PSL_2(q)\) for \(q\) a prime power and provide a classification of the `large' cocliques when \(q\) is prime. We then provide an interesting geometric example which contradicts this result when \(q\) is not prime and illustrate why the methods used for the prime case do not immediately extend to the prime-power case with the same result.
\end{abstract}


\section{Introduction}

It is a well-known result of Steinberg (and later others) that every finite simple group may be generated by just 2 elements, and from \cite{GenProb,KantorLubotzky} we know that if we pick two elements of the group at random then the chance that they will generate the group is high. This then motivated the definition of the generating graph by \cite{GenGraph}, a graph whose vertex set consists of the non-identity elements of the group \(G\) and we draw an edge between two elements precisely when they generate \(G\).

It is then interesting to look at what information the generating graph alone can tell us about the group. For example, in \cite{LucchiniMarotiRoneyDougal}, Lucchini, Mar\'oti and Roney-Dougal showed that the generating graph determines the group up to isomorphism for sufficiently large simple groups and for symmetric groups. The structure of the graph itself has also been investigated, such as the clique number, the presence of hamiltonian cycles and many other things (some examples: \cite{LucchiniMarotiClique, HamiltonianCycles, LucchiniMarotiQuestions}).

In this case, we will be looking at what we may be able to identify in the group simply by looking at maximal cocliques (totally disconnected induced subgraphs) of the generating graph for \(\PSL_2(q)\) for \(q\) a prime power, and we show that when \(q\) is prime the largest maximal cocliques are either entirely made up of involutions or are maximal subgroups. In particular, we prove the following theorem:

\begin{introthm}
	Let \(G = \PSL_2(p)\) for some prime \(p > 2\) and let \(A\) be a maximal coclique in \(G\). Then \(A\) is either a maximal subgroup, the conjugacy class of all involutions or \(\abs{A} \leq \frac{129}{2} (p-1) + 2\).
\end{introthm}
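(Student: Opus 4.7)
The plan is to translate the coclique condition into a purely subgroup-theoretic one: two non-identity elements $g,h \in G = \PSL_2(p)$ fail to generate $G$ iff $\langle g,h\rangle$ is contained in some maximal subgroup of $G$. By Dickson's classification the maximal subgroups of $G$ are the Borels $B \cong C_p \rtimes C_{(p-1)/2}$, the dihedrals $D_{p-1}$ and $D_{p+1}$, and possibly copies of $A_4$, $S_4$ or $A_5$ (according to the residue of $p$ modulo $8$ and modulo $5$). Thus $A$ is a coclique iff every pair of its elements is jointly contained in some maximal subgroup.

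First I would dispose of two easy cases. If $A$ contains an element $g$ of order $p$ then, since only the Borels of $G$ contain $p$-elements and $g$ lies in a unique Borel (the normaliser of its Sylow $p$-subgroup), the entirety of $A$ lies in that single Borel $B_g$, and maximality of $A$ forces $A = B_g\setminus\{1\}$, so $A$ is a maximal subgroup. If instead every element of $A$ is an involution, then $A$ lies in the class $\mathcal{I}$ of all involutions; any two involutions of $G$ generate a proper dihedral subgroup, so $\mathcal{I}$ itself is a coclique and maximality gives $A = \mathcal{I}$.

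It remains to analyse the case in which $A$ contains no $p$-element but some non-involution $g$ of order $d > 2$ (so $d \mid (p\pm1)/2$). Such a $g$ lies in a unique dihedral $D = N_G(T) \in \{D_{p-1}, D_{p+1}\}$, the normaliser of the cyclic torus $T$ containing $g$, and, unless $d \in \{3,5\}$ and the relevant exceptional subgroup exists, in no maximal subgroup besides $D$ and the two Borels through $T$. The pairwise coclique condition then confines every other element of $A$ to this small collection of maximal subgroups through $g$. Applying the same observation simultaneously to several non-involutions of $A$ should rule out configurations of $A$ spread across many distinct dihedrals or Borels and show that either all non-involutions of $A$ lie in a common dihedral (whence $A$ fits inside a maximal subgroup by maximality), or they sit in exceptional subgroups of bounded size; in either subcase the involutions of $A$ compatible with the remaining structure can be controlled by a linear function of $p$.

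The main obstacle will be the detailed book-keeping for non-involutions of small order $d \in \{3,5\}$, which may lie in the exceptional maximal subgroups $A_4$, $S_4$, $A_5$ whenever they occur. For such $g$ the number of maximal overgroups is greater and the collapsing argument above becomes more delicate; one must carefully track how these exceptional subgroups intersect the surrounding dihedrals and Borels, and how many involutions can simultaneously fail to generate $G$ with the elements of a $D_{p\pm1}$ together with some number of $A_5$'s or $S_4$'s. The explicit constant $\tfrac{129}{2}$ in the final bound is almost certainly produced by a tight worst-case count in such an $A_5$-plus-dihedral configuration, with the additive $+2$ allowing for a pair of involutions that sit in the relevant centralisers.
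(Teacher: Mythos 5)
Your overall strategy is the same as the paper's: reduce everything to Dickson's list of maximal subgroups, dispose of the order-\(p\) case (unique Borel overgroup) and the all-involutions case, and then bound \(A\) by the union of the maximal overgroups of a fixed non-involution \(x\). However, there are two genuine gaps in the main case. First, your proposed dichotomy --- ``either all non-involutions of \(A\) lie in a common dihedral, whence \(A\) fits inside a maximal subgroup by maximality, or they sit in exceptional subgroups of bounded size'' --- is false in its first branch. Even when the only non-involution of \(A\) is a single \(x\) of order \(3\) inside one copy of \(D_{p+1}\), the involutions of \(A\) need not lie in that dihedral: they may be drawn from any of the conjugates of \(A_4\), \(S_4\) or \(A_5\) containing \(x\), and the paper exhibits exactly such a maximal coclique of size \(\frac{3}{2}(p+1)+3\) that is not a subgroup. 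So the case you hoped to dismiss by maximality is precisely the case that produces the linear bound, and it must be counted, not collapsed.

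Second, and more importantly, you have no mechanism for carrying out that count. The entire bound rests on knowing \emph{how many} conjugates of \(A_4\), \(S_4\) or \(A_5\) contain a fixed element \(x\) of order \(3\), \(4\) or \(5\) (note you also omit order \(4\), which lies in \(S_4\) when that subgroup is maximal). The paper's key device is that \(N_G(x)\), which is dihedral of order \(p\pm1\), acts transitively by conjugation on the set of conjugates of \(H\in\{A_4,S_4,A_5\}\) containing \(x\), with stabiliser \(N_H(x)\); hence there are exactly \(\frac{p\pm1}{\lvert N_H(x)\rvert}\) such conjugates, each contributing a bounded number of involutions, which is what makes \(\lvert A\rvert\) linear in \(p\). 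Without this (or an equivalent) count, ``carefully track how these exceptional subgroups intersect'' does not yield any bound at all, and the constant \(\frac{129}{2}\) (which in fact arises from the \(\lvert x\rvert \mid p-1\) case with both \(A_5\) and \(S_4\) present, combining two Borel subgroups, \(D_{p-1}\), and the unions of all \(S_4\)'s and \(A_5\)'s through elements of orders \(3\), \(4\) and \(5\)) cannot be recovered. You have correctly identified where the difficulty lies, but the proposal stops exactly at the point where the proof has to begin.
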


Further, when \(q\) is no longer prime there is an interesting geometric example which crops up due to the isomorphism \(\PSL_2(q^2) \cong \POmega_4^-(q)\) which we illustrate and show to be a maximal coclique which prevents the extension of the previous theorem to the non-prime case. More formally, we have the following:

\begin{introthm}
	Let \(V\) be a 4-dimensional orthogonal space of \(-\) type over \(\Fq\) and fix some non-isotropic vector \(v \in V\). Then the set of all elements of \(G \coloneqq \POmega_4^-(q)\) with 2-dimensional eigenspaces lying in \(v^{\perp}\) is a maximal coclique of \(G\) of order \(q^3 + q\).
\end{introthm}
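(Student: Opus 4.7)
My plan has three main stages: (i) identify concretely which elements of $G$ belong to $S$, (ii) verify the count $\abs{S}=q^3+q$, and (iii) establish both the coclique and maximality properties using the geometry of $v^\perp$.

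For (i), I would observe that if $g \in \SOmega_4^-(q)$ has a $2$-dimensional eigenspace $E$ with eigenvalue $\lambda$, then preservation of the form gives $(x,y)=(gx,gy)=\lambda^2(x,y)$ for all $x,y\in E$, so either $\lambda^2=1$ or $E$ is totally isotropic. Since $V$ has $-$ type and Witt index $1$, no $2$-dimensional totally isotropic subspace exists, forcing $\lambda\in\{\pm 1\}$. Thus $S$ consists precisely of those $g$ whose $+1$- or $-1$-eigenspace contains a $2$-plane of $v^\perp$. For (ii), I would parametrise each $g \in S$ by the pair $(E,\pm 1)$ (where $E$ is the eigenspace in $v^\perp$) together with the restriction $g|_{E^\perp}$, which lies in the kernel of the spinor norm on $\SO(E^\perp)$; this is cyclic of order determined by the Witt type of $E^\perp$. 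Summing over the $2$-planes of $v^\perp$ of each Witt type---both types of non-degenerate plane, and the degenerate planes whose $1$-dimensional radical is an isotropic line of $v^\perp$ (these contributing unipotent elements)---then doubling for the sign choice and correcting for overcounts (e.g.\ the identity and involutions with both eigenspaces $2$-dimensional) should give exactly $q^3+q$.

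For (iii), the coclique property rests on the following clean observation: any two $2$-planes $E_g, E_h \subseteq v^\perp$ satisfy $\dim(E_g \cap E_h) \geq 2+2-3 = 1$, so they share a common $1$-subspace $\ell$. This $\ell$ is invariant under both $g$ and $h$ (it lies inside an eigenspace of each), so $\gen{g,h}$ stabilises $\ell$ setwise. If $\ell$ is isotropic, $\gen{g,h}$ lies in the Borel subgroup stabilising that isotropic point of $\POmega_4^-(q) \cong \PSL_2(q^2)$; if $\ell$ is non-isotropic, $\gen{g,h}$ lies in $\Stab_G(\ell)$, a conjugate of $\Stab_G(\gen{v})$, a subfield-type subgroup of order $q^3-q$. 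Either way $\gen{g,h}$ is proper, so $S$ is a coclique.

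For maximality, the remaining task is to show that any $k \in G \setminus S$ generates $G$ with some $g \in S$. Since $k$ has no $2$-dimensional eigenspace in $v^\perp$, it cannot feed into the intersection argument above, but ruling out every proper subgroup containing $\gen{g,k}$ requires a case analysis across the maximal subgroup types of $\PSL_2(q^2)$ (Borels, dihedral torus normalisers, subfield subgroups, and the small exceptional subgroups). Exhibiting, for each such $k$, an appropriate $g \in S$ for which no such subgroup captures $\gen{g,k}$ is the main remaining obstacle.
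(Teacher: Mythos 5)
Your parts (i)--(iii) are sound as far as they go, and they follow the paper's route: the eigenvalues are forced to be $\pm 1$, and the coclique property is exactly the paper's observation that two $2$-planes in the $3$-dimensional space $v^{\perp}$ meet in at least a line $\ell$, which is invariant under both elements, so $\gen{g,h}$ lies in the (proper) stabiliser of a $1$-space. Your count, however, is only a plan. The paper makes it precise by determining the full set of elements with a given $2$-dimensional eigenspace $U \subseteq v^{\perp}$ --- a cyclic group for each non-degenerate $U$ and an elementary abelian group of order $q$ (the pointwise stabiliser) for each degenerate one --- and by noting that distinct planes give sets meeting only in the identity, since an element with two distinct $2$-dimensional eigenspaces inside $v^{\perp}$ acts as a scalar on $v^{\perp}$, hence fixes $v$, hence is trivial. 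The sum $(q-2)\frac{q(q-1)}{2} + \frac{q^2(q+1)}{2} + (q^2-1) + 1 = q^3+q$ then falls out. These ingredients are routine but must be supplied.

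The genuine gap is maximality, which you explicitly defer and which is the substance of the theorem. The missing idea is that one does not need a bespoke witness for each maximal overgroup type of $\gen{g,k}$: given $k \in G \setminus A$, choose $h \in A$ of maximal order $q \pm 1$ whose eigenspace is the perpendicular of a non-degenerate $2$-space chosen to avoid every $k$-invariant $1$-space of $v^{\perp}$. Element orders dispose of $A_5$, the subfield stabilisers and $D_{q^2+1}$ at once (for instance, no divisor of both $q-1$ and $q^2+1$ exceeds $2$), leaving only the reducible $\mathcal{C}_1$ maximal subgroups, and by construction $\gen{k,h}$ stabilises no proper non-zero subspace, so it equals $G$. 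The existence of such an $h$ is itself a counting argument: the eigenspaces of $k$ meet $v^{\perp}$ in at most three $1$-spaces, each lying in at most $q+1$ of the $2$-planes of $v^{\perp}$, while $v^{\perp}$ contains $q^2$ non-degenerate $2$-planes and $q^2 > 3(q+1)$ for $q>3$; a separate short argument handles the case where $k$ merely stabilises (without fixing pointwise) a $2$-plane of $v^{\perp}$. Without this construction your proposed case analysis over the maximal subgroups of $\PSL_2(q^2)$ has no starting point, so as written the proposal establishes that $A$ is a coclique of the right size but not that it is maximal.
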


We currently believe that, in the prime power case, the example we have found and its conjugates are the only sufficiently large exceptions and that otherwise a similar result to the prime case should hold. The prime result tells us that, in particular, given the generating graph for \(\PSL_2(p)\) we can identify and distinguish the conjugacy class of all involutions and the Borel subgroups by size alone. In the prime power case, it should be possible to identify and distinguish the Borel subgroups, \(\PSL_2(q).2 \leq \PSL_2(q^2)\) and the geometric example.

We proceed by examining the maximal subgroups of \(\PSL_2(p)\) and their intersections in order to provide a linear bound (in \(p\)) on those maximal cocliques which contain at least one element of order greater than 2 and are not maximal subgroups. After this, we provide an example in the case of \(\PSL_2(q^2)\) which does not fit in with the result for \(\PSL_2(p)\) to show that the result does not generalise as-is and provide some evidence to suggest why using the same approach as in the prime case is likely to be either very complicated or impossible.

In addition, we would like to thank the referee for their careful reading of this paper and their very helpful suggestions.

\subsection*{The prime case}

\begin{notation*}
	In what follows I shall use \(\O_n^{\epsilon}(q)\) to denote the corresponding general orthogonal group, \(D_{2n}\) to denote the dihedral group of order \(2n\), \(C_n\) to denote the cyclic group of order \(n\) and \(A_n\) and \(S_n\) to denote the alternating and symmetric groups on \(n\) elements, respectively.  We begin by defining what we will be working with and proving some basic results which will be used throughout.
\end{notation*}

\begin{defn}
	Let \(G\) be a group. Then \(A \subseteq G\) is a {\itshape coclique} if for all \(g, h \in A\) we have that \(\left< g, h \right> \neq G\). We say that a coclique \(A\) is maximal if whenever \(B\) is a coclique we have that \(A \subseteq B \implies A = B\).
\end{defn}

\begin{lemma}	\label{subgp}
	If \(G\) is a group and \(A \subseteq G\) is a maximal coclique then \(\left< A \right> \neq G \implies \left< A \right> = A\) and \(A\) is a maximal subgroup of \(G\).
\end{lemma}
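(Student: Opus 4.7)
The plan is to pass from $A$ to the subgroup $H := \langle A \rangle$ it generates, observe that any subgroup of a proper subgroup of $G$ is automatically a coclique, and then use maximality of $A$ twice.

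First I would note that if $H \neq G$, then $H$ is itself a coclique: for any $g, h \in H$ we have $\langle g, h \rangle \leq H \lneq G$, so $\langle g, h \rangle \neq G$. Since $A \subseteq H$ and $A$ is a maximal coclique, this forces $A = H$, which gives $\langle A \rangle = A$.

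Next, to show $H = A$ is a maximal subgroup of $G$, I would take any subgroup $K$ with $H \leq K \leq G$ and $K \neq G$. Exactly the same observation shows $K$ is a coclique, and $A \subseteq K$; maximality of $A$ then forces $K = A = H$. Hence no proper subgroup of $G$ properly contains $H$, so $H$ is maximal (and proper by hypothesis).

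The argument is essentially immediate once one notices that the coclique condition is inherited by subgroups of a proper subgroup, so there is no real obstacle. The only minor point to be careful about is whether the identity is allowed in a coclique; the definition given in the paper does not exclude it, so $\langle A\rangle = A$ literally makes sense (if one did exclude the identity, the conclusion would read $\langle A\rangle\setminus\{1\} = A$ and the argument would be unchanged).
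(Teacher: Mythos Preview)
Your argument is correct and is essentially the same as the paper's: both observe that any subset of a proper subgroup is a coclique, then use maximality of \(A\) first against \(\langle A\rangle\) to get \(A=\langle A\rangle\), and then against any proper subgroup containing \(A\) to get maximality. The only cosmetic difference is that the paper phrases each step by adjoining a single element \(g\) outside \(A\), whereas you pass to the whole subgroup at once.
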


\begin{proof}
Suppose \(\left<A\right> \neq G\). If \(g \in \left<A\right> \setminus A\) then \(\left< A \cup \{ g \} \right> \subseteq \left< A \right> \neq G\). But then this contradicts the maximality of \(A\) and so \(\left< A \right> = A\). Also, if \(A \subseteq H \lneq G\) and we assume \(A \neq H\) then we may pick \(g \in H \setminus A\) and again note that \(\left< A \cup \{g\} \right> \subseteq \left< H \right> = H \neq G\). In particular, \(\left< g, h \right> \neq G\) for all \(h \in A\) but this again contradicts the maximality of \(A\) and so \(A = H\) is a maximal subgroup.
\end{proof}

\begin{lemma}	\label{uniquemax}
	If \(g \in G\) lies in a unique maximal subgroup \(M\) of \(G\) then any maximal coclique \(A\) containing \(g\) must be equal to \(M\).
\end{lemma}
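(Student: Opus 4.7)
The plan is to show that every element of $A$ lies in $M$, deduce that $\langle A \rangle \neq G$, and then invoke the previous lemma.

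First I would take an arbitrary $h \in A$ and use the coclique property: since $g, h \in A$ we have $\langle g, h \rangle \neq G$, so this subgroup is contained in \emph{some} maximal subgroup $N$ of $G$. But then $g \in N$, and by hypothesis $M$ is the \emph{only} maximal subgroup of $G$ containing $g$, forcing $N = M$. Hence $h \in M$, and since $h$ was arbitrary we conclude $A \subseteq M$.

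From $A \subseteq M \lneq G$ we get $\langle A \rangle \leq M \neq G$, so by \Cref{subgp} the maximal coclique $A$ is itself a maximal subgroup of $G$. Now $A$ and $M$ are both maximal subgroups with $A \subseteq M$, which forces $A = M$.

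I do not expect any serious obstacle here: the argument is essentially a one-line application of the uniqueness hypothesis followed by \Cref{subgp}. The only subtle point worth being explicit about is the need to handle the case $h = g$ (where $\langle g, g \rangle = \langle g \rangle$ is cyclic and hence proper in $G$, so it does sit inside some maximal subgroup, which must again be $M$), so that the argument above genuinely covers every element of $A$.
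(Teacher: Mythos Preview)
Your proof is correct and follows essentially the same route as the paper: show \(A \subseteq M\) by using that any proper subgroup containing \(g\) must sit inside \(M\), then conclude \(A = M\) from maximality. The only cosmetic difference is that at the end the paper appeals directly to the maximality of \(A\) as a coclique (since \(M\) is itself a coclique containing \(A\)), whereas you route through \cref{subgp}; and your aside about the case \(h = g\) is unnecessary, since \(g \in M\) trivially.
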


\begin{proof}
Let \(h \nin M\) and consider \(\left< g, h \right>\). Since every proper subgroup is contained in a maximal subgroup, we have that \(\left< g, h \right>\) lies in a maximal subgroup containing \(g\). But \(M\) is the only such subgroup. Thus we must have that \(\left< g, h \right> \subseteq M\) or \(\left< g, h \right> = G\). But \(h \nin M\) and so \(\left< g, h \right> = G\). Thus \(A \subseteq M\) and the maximality of \(A\) gives us that \(A = M\).
\end{proof}

For what we intend to do next, we will need to know the maximal subgroups of \(\PSL_2(p)\) and when they appear. We provide these below, one may find them in  \cite[\textsection260 (p286)]{DicksonLinear} or for a more modern treatment look at the maximal subgroups for \(\SL_2(p)\) in \cite{HoltMaximals} and then simply quotient out by its centre to obtain these subgroups.

\begin{thm}[Maximal subgroups of \(\PSL_2(p)\)]
	The conjugacy classes of maximal subgroups of \(\PSL_2(p)\) are as follows:
	\begin{enumerate}[i)]
	\item The Borel subgroups \(C_p \rtimes C_{\frac{1}{2} (p-1)}\) appear for all \(p\).
	\item \(D_{p-1}\) appears for all \(p > 11\).
	\item \(D_{p+1}\) appears for all \(p > 7\).
	\item \(A_4\) appears when \(p \equiv \pm 3, \, \pm 13 \mod 40\).
	\item \(S_4\) appears when \(p \equiv \pm 1 \mod 8\) and has two conjugacy classes.
	\item \(A_5\) appears when \(p \equiv \pm 1 \mod 10\) and has two conjugacy classes.
	\end{enumerate}
\end{thm}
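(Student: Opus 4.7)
The plan is to reduce the problem to classifying subgroups of \(\SL_2(p)\) and then projecting, using that maximal subgroups of \(\PSL_2(p)\) correspond bijectively to maximal subgroups of \(\SL_2(p)\) containing the centre \(Z = \{\pm I\}\). The backbone is Dickson's classical enumeration of subgroups of \(\SL_2(p)\), which (up to conjugacy) gives a short list of families: Borel, cyclic subgroup of a torus, normaliser of a split or non-split torus, and lifts of the polyhedral groups \(A_4\), \(S_4\), \(A_5\). Equivalently, one may invoke Aschbacher's theorem for \(\SL_2\), noting that the subfield class \(\mathcal{C}_5\) is empty since \(p\) is prime.

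For the generic families I would argue as follows. The Borel \(C_p \rtimes C_{(p-1)/2}\) is the stabiliser of a point on the projective line and is self-normalising, hence always maximal. The dihedrals \(D_{p-1}\) and \(D_{p+1}\) arise as \(\PSL_2(p)\)-normalisers of the split and non-split tori; by order comparison they can only fail to be maximal when they embed into one of the polyhedral subgroups, which yields the bounds \(p > 11\) and \(p > 7\) after handling the finitely many small cases directly.

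Next I would pin down the congruence conditions for the polyhedral subgroups via root-of-unity considerations in \(\mathbb{F}_p\). The group \(S_4\) requires an element of order \(4\), i.e.\ \(8 \mid p^2 - 1\), giving \(p \equiv \pm 1 \pmod{8}\). The group \(A_5\) requires \(\sqrt{5} \in \mathbb{F}_p\) (via the trace of a generator of order \(5\)), giving \(p \equiv \pm 1 \pmod{10}\). Since \(A_4 \leq S_4\) and \(A_4 \leq A_5\), the subgroup \(A_4\) is maximal precisely when neither of the larger groups appears, and intersecting the two non-congruences modulo \(40\) via CRT gives exactly \(p \equiv \pm 3, \pm 13 \pmod{40}\).

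For the conjugacy class counts I would exploit the \(\PGL_2(p)\)-action on \(\PSL_2(p)\)-classes of subgroups: a class is either \(\PGL_2(p)\)-stable (one class) or fuses in pairs (two \(\PSL_2\)-classes comprising a single \(\PGL_2\)-class). For Borels and dihedrals, conjugation by a diagonal matrix of non-square determinant extends the normaliser, giving one class each; for \(S_4\) and \(A_5\), the subgroup is already self-normalising in \(\PGL_2(p)\), so the \(\PGL_2\)-class has twice the size, producing two \(\PSL_2\)-classes. The main obstacle I anticipate is the bookkeeping for the polyhedral subgroups --- verifying the containments, the congruence conditions, and the class-fusion via explicit trace computations in \(\SL_2(p)\) --- though none of this is deep and is exactly what the cited references carry out.
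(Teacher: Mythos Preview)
The paper does not actually prove this theorem; it is stated as a known result with a pointer to Dickson \cite[\S260]{DicksonLinear} and to the tables for \(\SL_2(p)\) in \cite{HoltMaximals}, together with the remark that one should ``simply quotient out by its centre.'' Your proposal is precisely an outline of that route --- lift to \(\SL_2(p)\), invoke Dickson's subgroup list (equivalently Aschbacher with \(\mathcal{C}_5\) empty since \(q=p\)), and then sort out the congruence conditions and class counts --- so it is entirely in line with what the paper cites rather than proves.
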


We shall also make use of Aschbacher's classification of maximal subgroups of classical groups for the prime power case, so we shall include this here as well, though we ignore the classes which do not appear. This is also found in \cite{HoltMaximals}.

\begin{thm}[Aschbacher's Theorem for \(\SL_2(q)\) and \(\SOmega_4^-(q)\)]
	The maximal subgroups of \(\SL_2(q)\) and \(\SOmega_4^-(q)\) (acting naturally on \(V = \Fq^n\)) lie in one of the following classes:

	\begin{tabular}{cl} \toprule
	 {\bfseries Class}& {\bfseries Description}\\ \midrule
	 \(\mathcal{C}_1\) & Stabilisers of totally singular or non-singular subspaces. \\ 
	\(\mathcal{C}_2\) & Stabilisers of direct sum decompositions \(V = \bigoplus_{i=1}^t V_i\) where the \(V_i\) all have the \\
	 & same dimension. \\ 
	\(\mathcal{C}_3\) & Stabilisers of extension fields of \(\Fq\) of index 2. \\ 
	\(\mathcal{C}_5\) & Stabilisers of subfields of \(\Fq\) of prime index. \\ 
	\(\mathcal{C}_6\) & Normalisers of symplectic-type or extraspecial groups in absolutely irreducible \\
	 & representations. \\ 
	\(\mathcal{S}\) & See \cite[Definition 2.1.3]{HoltMaximals}, the specifics of this will not be used here. \\ \bottomrule
	\end{tabular}
\end{thm}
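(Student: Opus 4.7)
The statement is not really something to be proved from scratch in a paper on cocliques: it is a repackaging of Aschbacher's general classification of the maximal subgroups of finite classical groups, restricted to the two groups of interest and to the classes that actually occur in dimensions 2 and 4. My proposal is thus to treat Aschbacher's dichotomy as a black box (as packaged in \cite{HoltMaximals}) and verify by inspection which of the eight geometric classes, together with \(\mathcal{S}\), can appear here.

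Aschbacher's theorem asserts that a maximal subgroup \(M\) of a classical group \(G\) with natural module \(V\) either preserves some natural geometric structure --- a subspace \((\mathcal{C}_1)\), an equidimensional direct sum decomposition \((\mathcal{C}_2)\), an extension field structure of prime index \((\mathcal{C}_3)\), a tensor decomposition \((\mathcal{C}_4)\), a subfield of prime index \((\mathcal{C}_5)\), an extraspecial-type normal subgroup \((\mathcal{C}_6)\), a tensor-induced decomposition \((\mathcal{C}_7)\), or an additional non-degenerate classical form \((\mathcal{C}_8)\) --- or else \(M/Z(M)\) is almost simple and absolutely irreducible, placing \(M\) in class \(\mathcal{S}\). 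Given this dichotomy, the plan is simply to rule out \(\mathcal{C}_4, \mathcal{C}_7, \mathcal{C}_8\) for each of the two groups. For \(\SL_2(q)\), the natural module of dimension 2 admits no non-trivial tensor factorisation, which eliminates \(\mathcal{C}_4\) and \(\mathcal{C}_7\); and \(\mathcal{C}_8\) is vacuous because \(\SL_2(q)\) already preserves (up to scalars) the unique non-degenerate bilinear form on \(V\). For \(\SOmega_4^-(q)\), any tensor decomposition \(V = V_1 \otimes V_2\) with \(\dim V_i = 2\) would, via the isogeny \(\SL_2 \times \SL_2 \to \SOmega_4^+\), force the induced quadratic form to be of \(+\)-type, contradicting our hypothesis; so \(\mathcal{C}_4\) and \(\mathcal{C}_7\) are empty. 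Finally, \(\mathcal{C}_8\) is empty because there is no further non-degenerate classical form on \(V\) stabilised by \(\SOmega_4^-(q)\).

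The main obstacle is Aschbacher's theorem itself: its proof rests on an O'Nan-Scott style analysis of primitive linear groups together with delicate representation-theoretic arguments, and is well outside the scope of a paper of this length. The remaining dimension-counting and form-preservation checks above, while necessary to arrive at the precise list stated, are routine once the general theorem has been cited.
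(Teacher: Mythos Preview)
Your reading is correct: the paper does not prove this statement at all but simply imports it from \cite{HoltMaximals}, so your proposal to treat Aschbacher's dichotomy as a black box is exactly what the paper does. If anything you go further than the paper by sketching why \(\mathcal{C}_4\), \(\mathcal{C}_7\) and \(\mathcal{C}_8\) are vacuous in these two cases; the paper omits even that and just records the classes that survive.
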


\begin{thm} \label{main}
	Let \(G = \PSL_2(p)\) for some prime \(p > 2\) and let \(A\) be a maximal coclique in \(G\). Then \(A\) is either a maximal subgroup, the conjugacy class of all involutions or \(\abs{A} \leq \frac{129}{2}(p-1) + 2\).
\end{thm}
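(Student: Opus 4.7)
The plan is to first apply Lemma \ref{subgp}: if \(\langle A \rangle \neq G\) then \(A\) is itself a maximal subgroup of \(G\), so from now on we may assume \(\langle A \rangle = G\) and in particular that \(A\) is contained in no proper subgroup.

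If every element of \(A\) is an involution, I would observe that any two involutions of \(\PSL_2(p)\) generate a dihedral group, which is never the whole simple group \(G\) for \(p \geq 5\). Hence every set of involutions is automatically a coclique and, by maximality, \(A\) must be the (unique) conjugacy class of involutions in \(\PSL_2(p)\). To confirm that this class is a genuinely maximal coclique, one must show that no non-involution \(g\) can be added: for \(g\) of order \(p\), Lemma \ref{uniquemax} lets any involution outside the Borel containing \(g\) generate \(G\) with \(g\); for \(g\) of order \(n > 2\) dividing \((p \pm 1)/2\), a suitable involution outside the normalizer of \(\langle g \rangle\) does the job, and the maximal-subgroup classification rules out proper overgroups.

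Now assume \(A\) contains a non-involution \(g\). If \(g\) has order \(p\), the classification tells us \(g\) lies in only its unique Borel, so Lemma \ref{uniquemax} forces \(A\) to be that Borel. Otherwise \(g\) has order \(n > 2\) dividing \((p-1)/2\) or \((p+1)/2\), and the list of maximal subgroups of \(G\) containing \(g\) is short and explicit: two Borels and one \(D_{p-1}\) in the split case, or one \(D_{p+1}\) in the non-split case, plus possibly copies of \(A_4\), \(S_4\) or \(A_5\) when \(n\) divides \(12\), \(24\) or \(60\). Call this list \(M_1, \ldots, M_k\); then \(A \subseteq \bigcup_i M_i\). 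Since \(A\) lies in no proper subgroup, for each \(i\) we can pick \(h_i \in A \setminus M_i\), and the coclique condition forces every other element of \(A\) to lie in a maximal subgroup containing each such \(h_i\). By choosing the \(h_i\)'s to exit each \(M_j\) in turn, every \(m \in A \setminus \{g\}\) is confined to an intersection of two \emph{distinct} maximal subgroups of \(G\).

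The bound is then obtained by enumerating and summing the sizes of these intersections: two distinct Borels meet in a split torus of order \((p-1)/2\); a Borel meets \(D_{p-1}\) in that same torus and meets \(D_{p+1}\) in a very small group; two distinct dihedrals meet in at most a Klein four group; and any intersection involving \(A_4\), \(S_4\) or \(A_5\) is bounded by \(60\). Adding these over the bounded list of relevant pairs produces the explicit estimate \(\frac{129}{2}(p-1) + 2\). The hardest step will be the combinatorial bookkeeping: making sure the \(h_i\)'s can always be chosen so that no element of \(A\) can evade the pairwise-intersection argument by lying in a common maximal subgroup of \(g\) and \(h_i\), carefully handling the sporadic subgroups \(A_4, S_4, A_5\) whose appearance depends on \(p\) modulo small integers, and tracking overlaps between the various intersections in order to extract precisely the constant \(\frac{129}{2}\).
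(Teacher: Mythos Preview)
Your outline has a genuine gap at the step where you claim the list \(M_1,\ldots,M_k\) of maximal subgroups containing \(g\) is ``short and explicit''. When \(\abs{g}\in\{3,4,5\}\), this list is \emph{not} bounded independently of \(p\): a fixed element of order \(3\), for instance, lies in roughly \(\tfrac{p\pm 1}{3}\) conjugates of \(A_4\) (and similarly for \(S_4\), \(A_5\)). The paper establishes exactly this count via the transitive action of \(N_G(\langle g\rangle)\cong D_{p\pm 1}\) on those conjugates, with stabiliser \(N_{A_4}(\langle g\rangle)\); this orbit calculation is the key ingredient you are missing. With \(k\) of order \(p\), your strategy of summing pairwise intersections \(\abs{M_i\cap M_j}\le 60\) over \(\binom{k}{2}\) pairs would at best yield a bound quadratic in \(p\), not the linear bound claimed.

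There is also a logical slip in the intersection argument itself. From \(\langle m,h_i\rangle\neq G\) you only get that \(m\) lies in \emph{some} maximal subgroup containing \(h_i\); that subgroup need not be one of your \(M_j\), so you have not confined \(m\) to \(\bigcup_{i\neq j} M_i\cap M_j\). The paper avoids this entirely: it simply observes that \(A\subseteq\bigcup_i M_i\) (since every element of \(A\) shares a maximal overgroup with \(g\)), computes the number of \(M_i\) of each isomorphism type via the orbit argument above, and then sums the contributions --- roughly \(\tfrac{p\pm 1}{\abs{g}}\cdot\abs{A_5}\) and so on --- to extract the constant \(\tfrac{129}{2}\). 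The honest bookkeeping is in tracking how many conjugates of each small subgroup contain \(g\), not in intersecting them pairwise.
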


\begin{remark*}
	In fact, this result splits into two bounds --- we have that \(\abs{A} \leq \frac{93}{2}(p+1)\) when \(p < 7\) and \(\abs{A} \leq \frac{129}{2}(p-1) + 2\) otherwise, though in either case for \(p < 11\) both bounds are larger than \(\abs{G}\) so we may take the latter. Further, the above bounds are not tight, there is definitely room for improvement in the case where we allow for multiple elements of large order lying in distinct cyclic subgroups. However, we can at least show that there exists a maximal coclique of order linear in \(p\) which is not a maximal subgroup or the conjugacy class of all involutions.

	Indeed, in the case where \(G\) contains \(A_4\) as a maximal subgroup, we may fix an element \(x\) of order 3 in \(A_4\) and suppose that \(3 \divides p+1\). Acting by \(N_G(x)\) we see that \(x\) lies in \(\frac{1}{3}(p+1)\) copies of \(A_4\) and so we may include all of the involutions from these subgroups to form a coclique, along with the involutions from \(N_G(x)\). Note that there is no multiple counting here since \(\gen{x}\) is already a maximal subgroup of \(A_4\). We hence obtain a coclique of order \(\frac{3}{2}(p+1) + 3\) (or one higher, if \(N_G(x)\) contains a central involution) which one can check is maximal.

	This is the smallest example we obtain in this way out of all of the cases; the others give larger examples since either we also include contributions from the Borel subgroups or there are more conjugacy classes of these small maximal subgroups in \(G\).
\end{remark*}

We now prove \cref{main} through a series of lemmas. In the following, \(G\) and \(A\) are taken to be as in the statement of the theorem; \(A\) is not a maximal subgroup and contains some \(x\) (\(\abs{x} > 2\)) such that \(A \setminus \left<x\right>\) consists entirely of involutions.

\begin{lemma} \label{main1a}
	If \(G\) and \(A\) are as above with \(\abs{x} \divides p+1\) and \(G\) contains \(A_4\) as a maximal subgroup then \(\abs{A} \leq \frac{3}{2} (p+1) + 4\).
\end{lemma}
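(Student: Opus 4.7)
The plan is to exploit \cref{uniquemax} to restrict sharply which maximal subgroups can contain \(x\), and then to bound the number of involutions these subgroups collectively supply. Since \(A\) is not a maximal subgroup, \cref{uniquemax} implies that \(x\) lies in at least two maximal subgroups of \(G\). Now \(\abs{x} > 2\) and \(\abs{x} \mid p+1\); as \(\gcd(p-1,p+1)\le 2\), this forces \(\abs{x}\nmid (p-1)/2\), and clearly \(\abs{x}\ne p\). Consequently \(x\) lies in no Borel subgroup and in no \(D_{p-1}\). Moreover, the congruences \(p\equiv\pm 3,\pm 13\pmod{40}\) ensuring maximality of \(A_4\) also give \(p\not\equiv\pm 1\pmod{8}\) and \(p\not\equiv\pm 1\pmod{5}\), so neither \(S_4\) nor \(A_5\) is a subgroup of \(G\). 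Hence the only maximal subgroups containing \(x\) are the unique \(N_G(\gen{x})=D_{p+1}\) together with copies of \(A_4\).

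Since \(A_4\) has no elements of order \(4\) or \(5\), if \(\abs{x}\ne 3\) then \(x\) lies only in \(D_{p+1}\), contradicting the previous paragraph; hence \(\abs{x}=3\). Next I count the copies of \(A_4\) containing \(\gen{x}\). Being maximal in the simple group \(G\), each \(A_4\) is self-normalising, so there are \(\abs{G}/12\) copies of \(A_4\) and each contains exactly four Sylow \(3\)-subgroups. On the other hand, \(3\nmid p-1\) forces every cyclic subgroup of order \(3\) in \(G\) to lie in a non-split torus, whose normaliser is the ambient \(D_{p+1}\); this gives \(\abs{G}/(p+1)\) cyclic order-\(3\) subgroups in \(G\). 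A standard double count then yields exactly \((p+1)/3\) copies of \(A_4\) containing \(\gen{x}\).

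Every element of \(A\setminus\{x,x^{-1}\}\) is an involution lying in some maximal subgroup containing \(x\). The dihedral \(D_{p+1}\) contains at most \((p+3)/2\) involutions, and each of the \((p+1)/3\) copies of \(A_4\) contributes \(3\) involutions. Summing gives at most \((p+3)/2+(p+1)=(3p+5)/2\) involutions in \(A\), whence
\[
	\abs{A}\le 2+\tfrac{3p+5}{2}=\tfrac{3p+9}{2}\le \tfrac{3}{2}(p+1)+4.
\]

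The most delicate step is the \(A_4\)-count in the middle paragraph: one must verify that \(N_G(\gen{x})=D_{p+1}\) (so that cyclic order-\(3\) subgroups correspond bijectively to the dihedrals containing them) and that \(A_4\) is self-normalising in \(G\). Both are standard facts about \(\PSL_2(p)\), but are worth spelling out before invoking the double count; everything else is a careful subgroup-and-element-order census.
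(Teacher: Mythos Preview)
Your proof is correct and follows essentially the same strategy as the paper: reduce to \(\abs{x}=3\) via \cref{uniquemax}, determine that \(x\) lies in exactly \((p+1)/3\) copies of \(A_4\), and sum the involutions from \(D_{p+1}\) and these \(A_4\)'s. The only real difference is that you obtain the \((p+1)/3\) count by a global double count over all \(A_4\)'s and all order-\(3\) cyclic subgroups, whereas the paper lets \(N_G(x)=D_{p+1}\) act on the set of \(A_4\)'s containing \(x\), verifies transitivity directly, and reads off the orbit length from \(\abs{N_{A_4}(x)}=3\); both arguments are short and yield the same number.
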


\begin{proof}
	Here \(x\) may lie in \(D_{p+1}\) or \(A_4\). If \(\abs{x} > 3\) then \(x\) may only lie in \(D_{p+1}\) and thus lies in a unique maximal subgroup since the intersection of conjugate dihedral groups in \(\PSL_2(p)\) consists only of involutions. Then by \cref{uniquemax} we have that \(A\) must be a maximal subgroup.

	Otherwise \(\abs{x} = 3\) and we must count all of the involutions present in all subgroups in which \(x\) appears. We start off by noting that \(A_4\) contains 3 involutions and a dihedral group \(D_{p+1}\) contains at most \(\frac{p+3}{2}\) involutions (it contains one fewer if \(p \equiv 3 \mod 4\)). In order to figure out how many copies of \(A_4\) can contain \(x\), we note that \(D_{p+1}\) acts on the set of copies of \(A_4\) which contain \(x\) with stabiliser \(N_G(x) \cap A_4 = N_{A_4}(x) = \gen{x}\). This action is transitive since if \(x \in H_1 \cap H_2\) where \(H_1 \cong H_2 \cong A_4\) and \(H_2 = H_1^g\) then there exists \(h \in H_2\) such that \((x^h)^g = x\) and so \(gh \in N_G(x)\) and \(H_1^{gh} = H_2\).

	Thus the orbit of any copy of \(A_4\) containing \(x\) under this action will have length \(\frac{p+1}{3}\). Therefore from the entire conjugacy class of \(A_4\) in \(G\) we shall get a contribution of at most \(3 \frac{p+1}{3}\) involutions. Adding this up, we see that
	\[A \subseteq \left< x \right> \cup \Inv D_{p+1} \cup \frac{p+1}{3} \Inv A_4\]
	where \(\Inv H\) denotes the set of involutions in \(H\), and we abuse notation so that \(k \Inv A_4 := \bigcup_{i=1}^k \Inv G_i\) where \(G_i \cong A_4\) for all \(i\). This gives
	\[\abs{A} \leq 3 + \frac{p+3}{2} + p+1 = \frac{3(p+1)}{2} + 4. \qedhere\]
\end{proof}

\begin{lemma} \label{main1b}
	If \(G\) and \(A\) are as above with \(\abs{x} \divides p+1\) and \(G\) contains \(A_5\) or \(S_4\) as maximal subgroups then \(\abs{A} \leq \frac{17}{2} (p+1) + 4\).
\end{lemma}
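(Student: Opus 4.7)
The approach extends \cref{main1a} directly, now accounting for the possibility that $x$ lies in conjugates of $S_4$ or $A_5$ in addition to $D_{p+1}$. The plan is to case-split on $\abs{x}$: if $\abs{x} > 5$ then, since neither $S_4$ nor $A_5$ has elements of such order and distinct maximal tori of $\PSL_2(p)$ intersect trivially, $x$ lies in a unique maximal subgroup $D_{p+1}$; hence by \cref{uniquemax} we would get $A = D_{p+1}$, contradicting the hypothesis that $A$ is not a maximal subgroup. So we may assume $\abs{x} \in \{3, 4, 5\}$.

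For each remaining value of $\abs{x}$, I would run the orbit-stabiliser argument of \cref{main1a}: the group $N_G(\gen{x}) \cong D_{p+1}$ of order $p+1$ acts by conjugation on the set of conjugates of a small maximal subgroup $H \in \{S_4, A_5\}$ containing $x$, with stabiliser $N_H(\gen{x})$. The relevant normaliser orders inside $H$ are $\abs{S_3} = 6$ (for $\abs{x}=3$ in either $S_4$ or $A_5$), $\abs{D_8} = 8$ (for $\abs{x}=4$ in $S_4$), and $\abs{D_{10}} = 10$ (for $\abs{x}=5$ in $A_5$). Doubling each orbit count to account for the two $G$-conjugacy classes of $S_4$ and of $A_5$ yields at most $(p+1)/3$ conjugates of $S_4$ (resp.\ $A_5$) through $x$ when $\abs{x}=3$, at most $(p+1)/4$ conjugates of $S_4$ when $\abs{x}=4$, and at most $(p+1)/5$ conjugates of $A_5$ when $\abs{x}=5$. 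Each $S_4$ contributes at most $9$ involutions and each $A_5$ at most $15$, while $D_{p+1}$ contributes at most $(p+3)/2$ and $\gen{x} \setminus \{1\}$ contributes $\abs{x}-1$ further elements.

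The worst case is $\abs{x}=3$ with both $S_4$ and $A_5$ maximal, yielding
\[\abs{A} \leq 2 + \frac{p+3}{2} + 3(p+1) + 5(p+1) = \frac{17p+23}{2} \leq \frac{17}{2}(p+1) + 4,\]
and the cases $\abs{x}=4, 5$ produce strictly smaller bounds. The main obstacle, as in \cref{main1a}, is justifying that the $N_G(\gen{x})$-action is transitive on each $G$-conjugacy class of conjugates of $H$ through $x$; this reduces to showing that any two elements of $H \cap x^G$ are already $H$-conjugate. For $3$-cycles in $S_4$ or $A_5$ and $4$-cycles in $S_4$ this is automatic as each forms a single conjugacy class in $H$; for $5$-cycles in $A_5$ one must observe that the two $A_5$-classes of order-$5$ elements correspond bijectively to the two distinct $G$-classes $\{x, x^{-1}\}^G$ and $\{x^2, x^{-2}\}^G$ (since $N_G(A_5) = A_5$ admits no outer action), so that $H \cap x^G$ remains a single $H$-class.
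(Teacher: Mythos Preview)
Your proof is correct and follows essentially the same approach as the paper: both reduce to the worst case \(\abs{x}=3\) with \(S_4\) and \(A_5\) simultaneously maximal, act by \(N_G(\gen{x}) = D_{p+1}\) to count the conjugates of each containing \(x\) (doubling for the two \(G\)-classes), and arrive at the identical bound \(\frac{17}{2}(p+1)+4\). Your treatment is in fact more careful than the paper's, which simply refers back to \cref{main1a} for transitivity; one small quibble is that your parenthetical ``since \(N_G(A_5)=A_5\)'' is not the operative reason in the 5-cycle case---the point is rather that \(x \not\sim_G x^2\) because \(N_G(\gen{x})/C_G(x)\) has order \(2\) and acts by inversion, which then forces \(x^G \cap H = x^H\) exactly as you claim.
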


\begin{proof}
	Since we are only dealing with upper bounds, we shall combine the cases where either \(A_5\), \(S_4\) or both appear as maximal subgroups of \(G\) as this encompasses the arguments used for each individual case. Our worst case bound occurs when \(\abs{x} = 3\) and both \(A_5\) and \(S_4\) are maximal in \(G\), so we shall only consider this situation. There will be two conjugacy classes of both \(A_5\) and \(S_4\), so we must double the contribution we get from each type of subgroup. We act by \(D_{p+1}\) as in the previous lemma, noting that normalisers in \(A_5\) and \(S_4\) are dihedral rather than cyclic as in \(A_4\), to see that
	\[A \subseteq \left<x\right> \cup \Inv D_{p+1} \cup \frac{2(p+1)}{6} \Inv S_4 \cup \frac{2(p+1)}{6} \Inv A_5\]
	and so
	\[\abs{A} \leq 3 + \frac{p+3}{2} + 3(p+1) + 5(p+1) = \frac{17(p+1)}{2} + 4. \qedhere\]
\end{proof}

\begin{lemma} \label{main2a}
	If \(G\) and \(A\) are as above with \(\abs{x} \divides p-1\) and \(G\) contains \(A_4\) as a maximal subgroup then \(\abs{A} \leq \frac{7}{2}(p-1) + 5\).
\end{lemma}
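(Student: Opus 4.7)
The plan is to enumerate the maximal subgroups of \(G\) containing \(x\) and bound the number of involutions each contributes, in analogy with \cref{main1a} but now accounting for a nontrivial Borel contribution. Since \(\abs{x} > 2\) and \(\abs{x} \divides p-1\), in fact \(\abs{x}\) divides \((p-1)/2\), so \(x\) lies in the split torus \(T \cong C_{(p-1)/2}\) and has two fixed points on \(\mathbb{P}^1\). The maximal subgroups of \(G\) containing \(x\) are therefore the two Borel subgroups \(B_1, B_2\) stabilising these two fixed points, the normaliser \(D_{p-1} = N_G(T) = N_G(\left< x \right>)\), and, when \(\abs{x} = 3\), certain copies of \(A_4\).

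For the Borels, each \(B_i \cong C_p \rtimes C_{(p-1)/2}\) has exactly \(p\) involutions when \(p \equiv 1 \pmod 4\) --- every element of the form \(u t_0\) with \(u\) in the unipotent radical and \(t_0\) the unique involution of \(T\) is itself an involution, since \(t_0\) acts on the unipotent radical by inversion --- and none when \(p \equiv 3 \pmod 4\). Since \(B_1 \cap B_2 = T\), the two Borels share at most one involution, so \(\abs{\Inv B_1 \cup \Inv B_2} \leq 2p - 1\). The group \(D_{p-1}\) has at most \((p+1)/2\) involutions. For \(\abs{x} = 3\), the orbit--stabiliser argument of \cref{main1a} transfers verbatim with \(D_{p-1}\) now playing the role of \(D_{p+1}\), yielding \((p-1)/3\) copies of \(A_4\) through \(x\), each contributing 3 involutions.

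Since \(A \setminus \left< x \right>\) consists of involutions, each of which lies in some maximal subgroup containing \(x\), summing these contributions gives
\[
A \subseteq \left< x \right> \cup \Inv B_1 \cup \Inv B_2 \cup \Inv D_{p-1} \cup \frac{p-1}{3} \Inv A_4
\]
in the \(\abs{x} = 3\) case, so
\[
\abs{A} \leq 3 + (2p-1) + \frac{p+1}{2} + (p-1) = \frac{7p+3}{2} = \frac{7}{2}(p-1) + 5.
\]
If instead \(\abs{x} > 3\), the \(A_4\) term drops out while \(\abs{\left< x \right>}\) grows only to at most \((p-1)/2\), and a direct calculation shows the resulting bound is strictly smaller.

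The main obstacle is the Borel contribution, which does not arise in \cref{main1a} since there \(x\) has order dividing \(p+1\) and thus lies in no Borel. Here one must notice both that each \(B_i\) contains \(p\) involutions (not merely the unique torus involution) when \(p \equiv 1 \pmod 4\), and that the overlap \(B_1 \cap B_2 = T\) accounts for the only shared involution between the two Borels; the \(A_4\) count is then a routine adaptation of the previous lemma.
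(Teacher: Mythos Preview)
Your proof is correct and follows essentially the same route as the paper's: identify the maximal subgroups containing \(x\) (two Borels, one \(D_{p-1}\), and when \(\abs{x}=3\) a collection of \(A_4\)'s counted via the \(N_G(\langle x\rangle)\)-action), bound the involutions in each, and sum. Your bookkeeping differs only cosmetically --- you subtract the shared torus involution between \(B_1\) and \(B_2\) and keep \((p+1)/2\) for \(\Inv D_{p-1}\), whereas the paper keeps \(2p\) for the Borels and writes \((p-1)/2\) for the dihedral contribution --- but both tallies give \(\tfrac{7}{2}(p-1)+5\), and your split at \(\abs{x}=3\) versus \(\abs{x}>3\) is the natural one given that only \(A_4\) is present.
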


\begin{proof}
	In this case, we have the most subgroups to consider as \(x\) may lie in a Borel subgroup, \(D_{p-1}\) or \(A_4\).

	We again start by considering the case where \(\abs{x} > 5\). Such an \(x\) will lie in one copy of \(D_{p-1}\) and two distinct Borel subgroups which will consist of the upper and lower triangular matrices with respect to a basis in which \(x\) is diagonal. We need only count the involutions in these subgroups, noting that a Borel subgroup contains at most \(p\) involutions. In this case we get
	\[A \subseteq \left< x \right> \cup \Inv \mathcal{B}_1 \cup \Inv \mathcal{B}_2 \cup \Inv D_{p-1}\]
	where \(\mathcal{B}_1\) and \(\mathcal{B}_2\) are the Borel subgroups in which \(x\) lies. This then gives us
	\[\abs{A} \leq \abs{x} + 2p + \frac{p+1}{2} = \frac{5(p+1)}{2} + \abs{x} - 2.\]
	When \(\abs{x} \leq 5\) we must split this into several subcases as before.

	Since \(G\) contains \(A_4\) as a maximal subgroup we need only concern ourselves with the case where \(\abs{x} = 3\) and we count involutions as before. We may find out how many copies of \(A_4\) contain \(x\) by using the action of \(D_{p-1}\) on the set of \(A_4\)s containing \(x\) in a very similar way to previous cases. So we have 
	\[A \subseteq \left< x \right> \cup \Inv \mathcal{B}_1 \cup \Inv \mathcal{B}_2 \cup \Inv D_{p-1} \cup \frac{p-1}{3} \Inv A_4\]
	which then gives
	\[\abs{A} \leq 3 + 2p + \frac{p-1}{2} + p-1 = \frac{7(p-1)}{2} + 5. \qedhere\]
\end{proof}

\begin{lemma} \label{main2b}
	If \(G\) and \(A\) are as above with \(\abs{x} \divides p-1\) and \(G\) contains \(A_5\) or \(S_4\) as maximal subgroups then \(\abs{A} \leq \frac{21}{2}(p-1) + 5\).
\end{lemma}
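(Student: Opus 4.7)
The plan is to follow the template of \cref{main2a} essentially verbatim, but replace the $A_4$ contribution by the combined contribution of $S_4$ and $A_5$, exactly as was done when passing from \cref{main1a} to \cref{main1b}. Since we only need an upper bound, I will treat the worst case: both $S_4$ and $A_5$ appear as maximal subgroups of $G$ (each with two conjugacy classes) and $|x|=3$, so that every small maximal subgroup is allowed to contain $x$. The case $|x|>5$ is already subsumed by \cref{main2a}, since then only the two Borels and the unique $D_{p-1}$ contain $x$; the cases $|x|=4$ (in which only $S_4$ contributes) and $|x|=5$ (in which only $A_5$ contributes) can be checked directly to give strictly smaller totals than the bound obtained below.

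The key step is to count the copies of $S_4$ and $A_5$ containing $x$. For each of the two $G$-conjugacy classes of $S_4$ (respectively $A_5$), I would let $N_G(\langle x\rangle) = D_{p-1}$ act on the conjugates containing $x$. Transitivity follows by the element-chasing used in \cref{main1a}: if $x\in H_1\cap H_2$ with $H_2=H_1^g$, one picks $h\in H_2$ with $(x^h)^g = x$, so that $gh\in N_G(\langle x\rangle)$ and $H_1^{gh}=H_2$. The stabiliser is then $N_H(\langle x\rangle)$, which in both $S_4$ and $A_5$ is dihedral of order $6$ --- this is the structural change from the $A_4$ case, where the normaliser is cyclic of order $3$ and the orbits are twice as long. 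Hence each orbit has length $\tfrac{p-1}{6}$, and summing over the two classes gives $\tfrac{p-1}{3}$ copies of each.

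I would then assemble the count, using that $S_4$ contains $9$ involutions and $A_5$ contains $15$, together with the estimates from \cref{main2a} that each Borel contributes at most $p$ involutions and $D_{p-1}$ at most $\tfrac{p-1}{2}$:
\[A \subseteq \langle x\rangle \cup \Inv \mathcal{B}_1 \cup \Inv \mathcal{B}_2 \cup \Inv D_{p-1} \cup \tfrac{p-1}{3}\Inv S_4 \cup \tfrac{p-1}{3}\Inv A_5,\]
giving
\[\abs{A} \leq 3 + 2p + \tfrac{p-1}{2} + 3(p-1) + 5(p-1) = \tfrac{21}{2}(p-1) + 5.\]
No step is conceptually hard; the only moderately delicate point is the transitivity of the $D_{p-1}$-action on copies of $S_4$ or $A_5$ through $x$, after which the argument is a direct analogue of \cref{main2a} with the $A_4$ contribution swapped for the $S_4$ and $A_5$ contributions.
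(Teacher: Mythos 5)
Your proposal is correct and follows essentially the same route as the paper: act by \(D_{p-1}\) on the conjugates of \(S_4\) and \(A_5\) containing \(x\), note the normalisers there are dihedral of order 6 so each of the two classes contributes \(\frac{p-1}{6}\) copies, and add the involution counts from the two Borels and \(D_{p-1}\) exactly as in \cref{main2a}. The paper's proof is just a terser version of this same computation, arriving at the identical bound \(\frac{21}{2}(p-1)+5\).
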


\begin{proof}
	As with the last time, we shall merge all of the cases where \(A_5\), \(S_4\) or both appear as maximal subgroups of \(G\). Acting with \(D_{p-1}\) we see that
	\[A \subseteq \left<x\right> \cup \Inv \mathcal{B}_1 \cup \Inv \mathcal{B}_2 \cup \Inv D_{p-1} \cup \frac{2(p-1)}{6} \Inv A_5 \cup \frac{2(p-1)}{6} \Inv S_4\]
	and so, noting that if \(D_{p-1}\) contains a central involution it will also lie in a Borel subgroup,
	\[\abs{A} \leq 3 + 2p + \frac{p-1}{2} + 5(p-1) + 3(p-1) = \frac{21(p-1)}{2} + 5. \qedhere\]
\end{proof}

We now remove the condition that \(A\) contains essentially only one element of large order and simply suppose it has at least one element of order greater than 2.

\begin{lemma} \label{main3a}
	Choose some \(x \in A\) with \(\abs{x} > 2\). If \(\abs{x} \divides p+1\) and \(G\) contains \(A_4\) as a maximal subgroup then \(\abs{A} \leq \frac{9}{2} (p+1) + 1\).
\end{lemma}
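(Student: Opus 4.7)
The plan is to exploit the coclique property: for any \(y \in A\) we have \(\gen{x,y} \neq G\), so \(\gen{x,y}\) lies inside some maximal subgroup of \(G\) containing \(x\). Hence \(A\) is contained in the union of all maximal subgroups of \(G\) that contain \(x\), and the task reduces to identifying these and bounding the size of the union.

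First I would enumerate the conjugacy classes of maximal subgroups that can contain \(x\). Since \(\abs{x}\) divides \(p+1\) and exceeds \(2\), and since \(\gcd(p+1,\,p(p-1)/2)\) divides \(2\), the order of \(x\) divides neither \(p\) nor \((p-1)/2\); hence \(x\) lies in no Borel subgroup and in no conjugate of \(D_{p-1}\). Moreover, when \(A_4\) is maximal in \(G\) (so \(p \equiv \pm 3,\, \pm 13 \pmod{40}\)), a direct check against \(p \equiv \pm 1 \pmod{8}\) and \(p \equiv \pm 1 \pmod{10}\) shows that neither \(S_4\) nor \(A_5\) is maximal. So the only maximal subgroups of \(G\) containing \(x\) are conjugates of \(D_{p+1}\) and of \(A_4\).

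Next I would split on \(\abs{x}\). If \(\abs{x} > 3\) then \(x \notin A_4\) (as \(A_4\) has no element of order greater than \(3\)), and by the argument used in \cref{main1a} \(x\) lies in a unique conjugate of \(D_{p+1}\); \cref{uniquemax} then forces \(A\) to equal that subgroup, contradicting the standing hypothesis that \(A\) is not a maximal subgroup. Hence \(\abs{x} = 3\), and by the transitive \(D_{p+1}\)-action described in \cref{main1a} there are exactly \(\frac{p+1}{3}\) copies of \(A_4\) containing \(\gen{x}\).

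For the final count I would use that \(\gen{x}\) is maximal among proper subgroups of \(A_4\) (since \(A_4\) has no subgroup of order \(6\)), so any two distinct copies of \(A_4\) containing \(x\) meet precisely in \(\gen{x}\). Inclusion-exclusion then yields \(3 + \frac{p+1}{3}(12-3) = 3(p+1) + 3\) elements in the union of these \(A_4\)'s; joining with \(D_{p+1}\) and subtracting at least the common \(\gen{x}\) gives \(\abs{A} \leq 4(p+1)\), comfortably within \(\frac{9}{2}(p+1) + 1\). I expect the main care point to be the enumeration of maximal subgroups containing \(x\) --- in particular the congruence check ruling out \(S_4\) and \(A_5\) when \(A_4\) is maximal --- after which the inclusion-exclusion is essentially mechanical.
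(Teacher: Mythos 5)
Your proposal is correct and follows essentially the same route as the paper: reduce to the union of the maximal subgroups containing \(x\) (one copy of \(D_{p+1}\) and, via the transitive \(N_G(x)\)-action, \(\frac{p+1}{3}\) copies of \(A_4\)), with the \(\abs{x}>3\) case dispatched by \cref{uniquemax}. Your inclusion--exclusion using that distinct copies of \(A_4\) meet exactly in \(\gen{x}\) is just tighter bookkeeping than the paper's count of \(\frac{p+1}{3}\abs{A_4} + \abs{\Inv D_{p+1}}\), and in fact yields the sharper bound \(4(p+1)\).
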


\begin{proof}
	If \(\abs{x} > 3\) then \(x\) may only lie in \(D_{p+1}\) and thus by \cref{uniquemax} we are done. Otherwise, we need only concern ourselves with \(\abs{x} = 3\). We note again that \(D_{p+1}\) contains a unique cyclic subgroup of order 3. As usual, \(N_G(x)\) acts on the set of \(A_4\)s containing \(x\) by conjugation with orbit length at most \(\frac{1}{\abs{x}} (p-1) \eqqcolon k_x\).

	Now, we note that any involutions which lie outside of \(D_{p+1}\) must lie in some shared maximal subgroup with every element of \(A\) of order 3. Thus we have that any such involutions must lie in
	\[\bigcap_{\substack{\abs{x} = 3 \\ x \in A}} \bigcup_{i=1}^{k_x} A_4 \subseteq \bigcup_{i=1}^{k_x} A_4.\]

	We therefore see that 
	\[\abs{A} \leq \frac{p+1}{3} \abs{A_4} + \abs{\Inv D_{p+1}} \leq 4(p+1) + \frac{p+1}{2} + 1 = \frac{9(p+1)}{2} + 1. \qedhere\]
\end{proof}

\begin{lemma} \label{main3b}
	Choose some \(x \in A\) with \(\abs{x} > 2\). If \(\abs{x} \divides p+1\) and \(G\) contains \(A_5\) and \(S_4\) as maximal subgroups then \(\abs{A} \leq \frac{93}{2} (p+1)\).
\end{lemma}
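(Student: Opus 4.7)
My plan is to mirror the proof of \cref{main3a} almost verbatim, now juggling the two $G$-conjugacy classes each of $A_5$ and $S_4$ in place of the single class of $A_4$. I would first dispose of $\abs{x} > 5$: in that regime $x$ cannot lie in any $A_5$ or $S_4$ (whose element orders are at most $5$), nor in a Borel subgroup or in $D_{p-1}$ (since any divisor of $p+1$ exceeding $2$ is coprime to $p$ and has gcd at most $2$ with $(p-1)/2$), so $x$ lies in the unique maximal subgroup $D_{p+1} = N_G(\gen{x})$. By \cref{uniquemax} this forces $A = D_{p+1}$, contradicting the hypothesis that $A$ is not a maximal subgroup, and so leaves only the cases $\abs{x} \in \{3, 4, 5\}$.

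For each remaining order I would count the maximal subgroups containing $\gen{x}$ via the conjugation action of $D_{p+1}$, following the template of \cref{main1a}. Transitivity within a $G$-conjugacy class of $M$ ($\cong A_5$ or $S_4$) holds by exactly the earlier argument: if $M_2 = M_1^g$ with both containing $\gen{x}$, then the two order-$\abs{x}$ subgroups $\gen{x}$ and $\gen{x}^{g^{-1}}$ of $M_1$ are conjugate by some $h \in M_1$ (Sylow for $\abs{x} \in \{3,5\}$, and the three $C_4$'s being conjugate in $S_4$ for $\abs{x} = 4$), whence $hg \in D_{p+1}$ and $M_1^{hg} = M_2$. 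The stabiliser is $D_{p+1} \cap M = N_M(\gen{x})$. For $\abs{x} = 3$ both normalisers are dihedral of order $6$, giving $(p+1)/6$ copies per class and hence $(p+1)/3$ copies of $A_5$ and $(p+1)/3$ of $S_4$ in total. For $\abs{x} = 4$ only $S_4$ is relevant with $N_{S_4}(\gen{x}) = D_8$, giving $(p+1)/4$ copies; for $\abs{x} = 5$ only $A_5$ is relevant with $N_{A_5}(\gen{x}) = D_{10}$, giving $(p+1)/5$ copies.

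Since every $y \in A$ shares a maximal subgroup with $x$, the set $A$ is contained in $D_{p+1}$ together with these enumerated copies of $A_5$ and $S_4$. Summing cardinalities in the dominant case $\abs{x} = 3$ yields
\[ \abs{A} \leq \abs{D_{p+1}} + \tfrac{p+1}{3}\abs{S_4} + \tfrac{p+1}{3}\abs{A_5} = (p+1) + 8(p+1) + 20(p+1) = 29(p+1), \]
which is comfortably below $\tfrac{93}{2}(p+1)$; the cases $\abs{x} = 4$ and $\abs{x} = 5$ give only $7(p+1)$ and $13(p+1)$ respectively. The only real obstacle is careful bookkeeping --- one must remember to double orbit counts across the two $G$-conjugacy classes of each of $A_5$ and $S_4$, and to verify that the Sylow-conjugacy step of the transitivity argument still applies when $\abs{x} = 4$ since the relevant order-$4$ subgroups of $S_4$ are not themselves Sylow but are nonetheless $S_4$-conjugate --- so no genuinely new difficulty arises beyond what has already appeared in \cref{main1a,main3a}.
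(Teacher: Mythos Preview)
Your argument is correct and in fact slightly sharper than the paper's. Both proofs dispose of \(\abs{x}>5\) via \cref{uniquemax} and then count, using the transitive action of \(D_{p+1}=N_G(\gen{x})\), the copies of \(A_5\) and \(S_4\) through a given cyclic subgroup. The difference is that you case-split on the single value of \(\abs{x}\in\{3,4,5\}\) and bound \(A\) by \(D_{p+1}\) together with only those \(A_5\)'s and \(S_4\)'s containing \emph{that} \(x\), obtaining at worst \(29(p+1)\). The paper instead fixes representatives \(x,y,z\) of orders \(3,4,5\) in the unique \(D_{p+1}\) simultaneously and covers \(A\) by \(\Inv D_{p+1}\) together with \emph{all} copies of \(A_5\) through \(x\) or \(z\) and all copies of \(S_4\) through \(x\) or \(y\); summing those contributions is what produces the looser constant \(\tfrac{93}{2}\). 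Your observation that every element of \(A\) already shares a maximal subgroup with the one given \(x\) makes the extra anchors \(y,z\) redundant, so your route is both cleaner and yields a better constant, at the cost of a short case distinction.
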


\begin{proof}
	Again, we merge the cases where any of these maximal subgroups appear. As before, if \(\abs{x} > 5\) then we are done by \cref{uniquemax}. Our worst case of course is when both \(A_5\) and \(S_4\) appear as maximal subgroups of \(G\), so this is the one we shall deal with. Here we fix up to three elements, \(x\), \(y\) and \(z\) of respective orders 3, 4 and 5, lying in a single copy of \(D_{p+1}\). Then any element of \(A\) will lie in one of the copies of \(S_4\) or \(A_5\) containing any of \(x\), \(y\) or \(z\) or be an involution in \(D_{p+1}\) since \(D_{p+1}\) has a unique cyclic subgroup of any given order greater than 2. Thus,
	\[A \subseteq \Inv D_{p+1} \cup 2\left( \bigcup_{x \in S_4} S_4 \cup \bigcup_{y \in S_4} S_4 \right) \cup 2 \left( \bigcup_{x \in A_5} A_5 \cup \bigcup_{z \in A_5} A_5 \right).\]
	Using our normal approach to determine how many copies of \(S_4\) or \(A_5\) may contain a given element, noting that their normalisers in these groups are dihedral, we obtain
	\[\abs{A} \leq \frac{p+1}{2} + 120 \left( \frac{p+1}{6} + \frac{p+1}{10} \right) + 48 \left( \frac{p+1}{6} + \frac{p+1}{8} \right) = \frac{93(p+1)}{2}. \qedhere\]
\end{proof}

\begin{lemma} \label{main4a}
	Choose some \(x \in A\) with \(\abs{x} > 2\). If \(\abs{x} \divides p-1\) and \(G\) contains \(A_4\) as a maximal subgroup then \(\abs{A} \leq \frac{17}{2}(p-1) + 6\) (and the \(+6\) may be dropped for \(p > 5\)).
\end{lemma}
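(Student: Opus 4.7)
The plan is to adapt the strategy of \cref{main3a} and \cref{main2a}, splitting on the order of \(x\). Since \(A_4\) has element orders at most \(3\) and \(G\) contains no \(S_4\) or \(A_5\), when \(\abs{x} > 3\) the only maximal subgroups of \(G\) containing \(x\) are the two Borel subgroups \(B_1, B_2\) stabilising the eigenlines of \(x\) and the dihedral \(D_{p-1}\) normalising the split torus \(T\) containing \(x\); when \(\abs{x} = 3\), as in \cref{main1a}, there are additionally \((p-1)/3\) copies of \(A_4\) containing \(x\), coming from the transitive action of \(N_G(\langle x\rangle) = D_{p-1}\) on these copies with stabiliser \(\langle x\rangle\). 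Every element of \(A\) shares a maximal subgroup with \(x\) and so lies in one of these.

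No unipotent element can appear in \(A\): such an element lies in a unique Borel, so \cref{uniquemax} would force \(A\) to equal that Borel, contradicting the assumption that \(A\) is not a maximal subgroup. Hence every non-involution of \(A\) is semisimple. Counting as in \cref{main2a}, involutions contribute at most \(\abs{\Inv B_1} + \abs{\Inv B_2} + \abs{\Inv D_{p-1}} \leq 2p + (p-1)/2\) (any central involution of \(T\) is already counted in both Borels), non-involutions in \(\langle x\rangle\) contribute at most \(\abs{x} - 1\), and (when \(\abs{x}=3\)) the copies of \(A_4\) contribute at most \((p-1)/3 \cdot 12 = 4(p-1)\) further elements. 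Adding these gives a total comfortably below \(\frac{17}{2}(p-1)+6\), with the constant \(6\) absorbing overcounting and dropped for \(p>5\).

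The delicate part is the possibility that \(A\) contains a semisimple non-involution \(y\) lying in a split torus \(T' \neq T\). For such a \(y \in T' \subseteq B_1\) (say), intersecting the three maximal subgroups containing \(x\) with the three containing \(y\) (namely \(B_1\), the other Borel \(B'\) through \(T'\), and \(N_G(T')\)) shows that every element of \(A\) lies in \(B_1\) together with a small list of torus intersections, each of order at most \((p-1)/2\). Were \(A\) actually contained in \(B_1\) it would be a maximal subgroup by \cref{subgp}, contradicting the assumption; hence these outlying contributions are tightly controlled and the total remains within the claimed bound. This torus-intersection bookkeeping is the principal obstacle; the remaining addition of the counts is routine.
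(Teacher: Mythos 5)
Your outline matches the paper's (same list of maximal overgroups of \(x\), same orbit computation giving \(\tfrac{1}{3}(p-1)\) copies of \(A_4\), same split on \(\abs{x}>3\) versus \(\abs{x}=3\), and the exclusion of unipotents via \cref{uniquemax} is consistent with \cref{main5}), but there is a genuine gap in the count. In this lemma, unlike \cref{main2a}, the non-involutions of \(A\) need not lie in \(\gen{x}\): each Borel subgroup \(\mathcal{B}_i\) contains \(p\) distinct cyclic subgroups of order \(3\), every one of whose nontrivial elements shares the maximal subgroup \(\mathcal{B}_i\) with \(x\) and with each other, so \(A\) may contain on the order of \(4p\) elements of order \(3\) from \(\mathcal{B}_1\cup\mathcal{B}_2\) outside \(\gen{x}\). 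Your second paragraph allots only \(\abs{x}-1\) to non-involutions outside the \(A_4\)'s, so the sum you call ``comfortably below'' the bound is not an upper bound for \(\abs{A}\) at all; and once the missing term is restored, the naive total \(2p+\tfrac{1}{2}(p-1)+4p+4(p-1)\) \emph{exceeds} \(\tfrac{17}{2}(p-1)\), so the conclusion does not follow by routine addition.

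Your third paragraph acknowledges exactly this configuration but does not resolve it. Intersecting the maximal overgroups of \(x\) and of \(y\in T'\) only places \(A\) inside \(\mathcal{B}_1\) together with small intersections, and \(\abs{\mathcal{B}_1}=\tfrac{1}{2}p(p-1)\) is quadratic in \(p\); the inference ``\(A\not\subseteq\mathcal{B}_1\) by \cref{subgp}, hence the outlying contributions are tightly controlled'' is a non sequitur, since \(A\cap\mathcal{B}_1\) can contain all \(p\) involutions and all \(2p\) order-\(3\) elements of \(\mathcal{B}_1\) without \(A\) equalling \(\mathcal{B}_1\). The paper closes this by a second containment: writing \(2p\gen{x}\) for the union of the (at most) \(2p\) order-\(3\) cyclic subgroups of the two Borels, it shows
\[A \subseteq (\mathcal{B}_1\cap\mathcal{B}_2\cap D_{p-1}) \cup 2p\gen{x} \cup \bigcap_{\substack{\abs{y}=3\\ y\in A}}\bigcup_{i=1}^{k_y}A_4,\]
i.e.\ the involutions of \(A\) outside the split torus must share a maximal subgroup with \emph{every} order-\(3\) element of \(A\) and are thereby pushed into the \(A_4\)'s rather than counted once per Borel; this trade-off is what produces \(\tfrac{p-1}{2}+4(p-1)+4(p-1)=\tfrac{17}{2}(p-1)\). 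That bookkeeping is the content of the lemma, and it is precisely the step you defer.
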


\begin{proof}
	Here we need to consider contributions from two Borel subgroups, a copy of \(D_{p-1}\) and several copies of \(A_4\). If \(\abs{x} > 3\) then \(x\) may lie in \(D_{p-1}\) or two Borel subgroups, \(\mathcal{B}_1\) and \(\mathcal{B}_2\). If \(A\) is not one of these subgroups then \(A\) either consists of a few larger order elements along with all of the involutions from the subgroups in which they lie or \(A\) is simply made up of the intersections of these groups. In either case,
	\[A \subseteq (D_{p-1} \cap \mathcal{B}_1 ) \cup (D_{p-1} \cap \mathcal{B}_2) \cup (\mathcal{B}_1 \cap \mathcal{B}_2) \cup \Inv D_{p-1} \cup \Inv \mathcal{B}_1 \cup \Inv \mathcal{B}_2.\]
	But then
	\[\abs{A} \leq \abs{ D_{p-1} \cap \mathcal{B}_1 } + \abs{\Inv D_{p-1}} + 2 \abs{\Inv \mathcal{B}_1} \leq 3p\]
	since \(D_{p-1}\) has at most \(\frac{1}{2}(p+1)\) involutions, and the intersection of the Borel subgroups and \(D_{p-1}\) is the cyclic group of order \(\frac{1}{2}(p-1)\) consisting of those matrices which are diagonal with respect to the basis given by the eigenvectors of the two Borel subgroups.

	Otherwise, we need only concern ourselves with \(\abs{x} = 3\). We note again that \(D_{p-1}\) contains a unique cyclic subgroup of order 3 and also that the Borel subgroups each contain \(p\) distinct cyclic subgroups of order 3. As usual, \(N_G(x)\) acts on the set of \(A_4\)s containing \(x\) by conjugation with orbit length \(\frac{1}{3} (p-1)\).

	If we allow \(nH\) for \(H \leq G\) to denote the union of \(n\) copies of \(H\), we then get
	\[A \subseteq 2p \left< x \right> \cup \Inv \mathcal{B}_1 \cup \Inv \mathcal{B}_2 \cup \Inv D_{p-1} \cup \frac{p-1}{\abs{x}} \Inv A_4\]
	and so, accounting for some double counting,
	\[\abs{A} \leq 4(p-1) + 2p + \frac{p-1}{2} + 3\frac{p-1}{\abs{x}} \leq 4(p-1) + 2p + \frac{p-1}{2} + 3 \frac{p-1}{3} = 15\frac{p-1}{2} + 2\]
	where we have taken \(\frac{1}{2}(p-1)\) in place of \(\abs{\Inv D_{p-1}}\) since the additional involution will be contained in the Borel subgroups if it exists. Alternatively, we have
	\[A \subseteq (\mathcal{B}_1 \cap \mathcal{B}_2 \cap D_{p-1}) \cup 2p \left< x \right> \cup \bigcap_{\substack{\abs{x} = 3 \\ x \in A}} \bigcup_{i=1}^{k_x} A_4\]
	and from the above we may tidy up the final term to get
	\[\abs{A} \leq \frac{p-1}{2} + 4(p-1) + 12\frac{p-1}{\abs{x}} \leq \frac{p-1}{2} + 4(p-1) + 12 \frac{p-1}{3} = 17\frac{p-1}{2}. \qedhere\]
\end{proof}

\begin{lemma} \label{main4b}
	Choose some \(x \in A\) with \(\abs{x} > 2\). If \(\abs{x} \divides p-1\) and \(G\) contains \(A_5\) or \(S_4\) as maximal subgroups then \(\abs{A} \leq \frac{129}{2}(p-1) + 2\).
\end{lemma}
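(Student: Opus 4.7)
The plan is to mirror the structure of Lemmas \ref{main3b} and \ref{main4a}: split into cases by the order of \(x\), fix representatives of small orders inside a common dihedral subgroup, and bound \(A\) by unioning over the maximal subgroups containing these representatives. If \(\abs{x} > 5\), then \(x\) can only lie in \(D_{p-1}\) together with the two Borels \(\mathcal{B}_1, \mathcal{B}_2\) containing its split torus, so the counting of Lemma \ref{main4a} yields a bound of roughly \(3p\), well below the claimed \(\frac{129}{2}(p-1)+2\). Hence I assume \(\abs{x} \in \{3,4,5\}\) and, for uniformity, treat the worst case that both \(A_5\) and \(S_4\) are maximal in \(G\); a direct inspection of the congruence conditions in the classification of maximal subgroups of \(\PSL_2(p)\) shows that under this assumption \(A_4\) cannot also be maximal, so it contributes nothing.

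Whenever possible I would fix elements \(x, y, z \in A\) of orders \(3, 4, 5\) respectively, all lying in a common cyclic subgroup \(C_{(p-1)/2}\) of a single \(D_{p-1}\), so that they share the same pair of Borels \(\mathcal{B}_1, \mathcal{B}_2\). Any element of \(A\) must share a maximal subgroup with at least one of \(x, y, z\) and hence lies in \(D_{p-1}\), in \(\mathcal{B}_1 \cup \mathcal{B}_2\), in a copy of \(S_4\) through \(\gen{x}\) or \(\gen{y}\), or in a copy of \(A_5\) through \(\gen{x}\) or \(\gen{z}\). The sporadic subgroup counts proceed exactly as in Lemma \ref{main3b}: the orbit-stabiliser argument with \(N_G(\gen{w}) = D_{p-1}\) and the appropriate dihedral normaliser inside \(S_4\) or \(A_5\), doubled for the two conjugacy classes of each, contributes
\[2 \cdot 24 \Bigl(\tfrac{p-1}{6}+\tfrac{p-1}{8}\Bigr) + 2 \cdot 60 \Bigl(\tfrac{p-1}{6}+\tfrac{p-1}{10}\Bigr) = 46(p-1)\]
elements. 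For the Borel portion I would include the involutions of \(\mathcal{B}_1 \cup \mathcal{B}_2\) together with all the cyclic subgroups of orders \(3, 4, 5\) coming from the \(p\) conjugate split tori in each \(\mathcal{B}_i\) (roughly \(16(p-1)\) further elements when all three orders appear), plus \(\abs{\Inv D_{p-1}}\), and discount the overlap coming from the common torus \(\mathcal{B}_1 \cap \mathcal{B}_2 \subset D_{p-1}\) to obtain the claimed bound.

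The main obstacle is the bookkeeping of the Borel contribution. Each Borel carries \(p\) conjugate split tori and so hosts many cyclic subgroups of every order dividing \((p-1)/2\), and every element of such a subgroup shares \(\mathcal{B}_i\) with \(x\) and is therefore a genuine candidate for membership in \(A\). Collecting these without double-counting across \(\mathcal{B}_1\) and \(\mathcal{B}_2\), between the Borels and \(D_{p-1}\), or between the Borels and the sporadic copies through \(\gen{x}, \gen{y}, \gen{z}\), is where the precise constant must be recovered; the sporadic counts and the \(\abs{x} > 5\) case are routine by comparison. A minor secondary point is the verification that \(A_5\) or \(S_4\) being maximal precludes \(A_4\), which is a direct check of residues modulo \(40\).
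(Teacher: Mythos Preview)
Your proposal is correct and follows essentially the same route as the paper's own proof: reduce to \(\abs{x}\in\{3,4,5\}\), pick representatives \(x,y,z\) of these orders in a common torus of \(D_{p-1}\), and bound \(A\) by \(\Inv(\mathcal{B}_1\cup\mathcal{B}_2\cup D_{p-1})\), the order-\(3,4,5\) elements of the \(2p\) tori in the two Borels, and the sporadic copies of \(S_4\) and \(A_5\) through \(x,y,z\), arriving at exactly the paper's arithmetic \(\tfrac{p-1}{2}+2p+16(p-1)+46(p-1)=\tfrac{129}{2}(p-1)+2\). The one point worth making explicit is that the Borel contribution is limited to orders \(3,4,5\) precisely because any non-involution in \(A\) of larger order would already have been selected as \(x\) in the first step---once that is said, the Borel bookkeeping you flag as the main obstacle is in fact the routine part.
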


\begin{proof}
	As with the above cases, if \(\abs{x} > 5\) then we are done by \cref{uniquemax}. Otherwise our worst bound comes from when both \(A_5\) and \(S_4\) are present so we consider only this case and proceed in exactly the same way as before to obtain
	\[A \subseteq \Inv (\mathcal{B}_1 \cup \mathcal{B}_2 \cup D_{p-1}) \cup 2p \left< x \right> \cup 2p \left< y \right> \cup 2p \left< z \right> \cup 2 \bigcap_{\substack{\abs{x} \in \{3,4\}\\ x \in A}} \bigcup_{i=1}^{k_x} S_4 \cup \bigcap_{\substack{\abs{x} \in \{3,5\} \\ x \in A}} \bigcup_{i=1}^{k_x} A_5.\]
	Bounding this as usual, and accounting for some of the obvious multiple counting of \(\gen{x}\), \(\gen{y}\) and \(\gen{z}\), we have
	\[\abs{A} \leq \frac{p-1}{2} + 2p + 4(p-1) + 4(p-1) + 8(p-1) + 46(p-1) = \frac{129}{2} (p-1) + 2\]
	as required.
\end{proof}

\begin{lemma} \label{main5}
	Let \(A\) be a maximal coclique in \(G\) containing some element \(x\) of order \(p\). Then \(A\) is a Borel subgroup.
\end{lemma}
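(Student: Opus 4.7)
The plan is to invoke Lemma \ref{uniquemax} by showing that any element $x$ of order $p$ lies in a unique maximal subgroup of $G$, which is necessarily a Borel.

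First I would observe that $|G| = \frac{1}{2} p (p^2 - 1)$, so a Sylow $p$-subgroup of $G$ has order exactly $p$; hence $P := \langle x \rangle$ is a Sylow $p$-subgroup and, being of prime order, is the unique Sylow $p$-subgroup of $G$ containing $x$. By Sylow's theorem there are $p + 1$ conjugates of $P$, giving $|N_G(P)| = \frac{1}{2} p (p-1)$, which matches the order of a Borel $C_p \rtimes C_{(p-1)/2}$; so $N_G(P)$ is a Borel subgroup of $G$.

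Next I would run through the list of maximal subgroups of $\PSL_2(p)$ recalled above and rule out every class other than the Borels as a possible host of $x$. The dihedral subgroups $D_{p \pm 1}$ have orders coprime to $p$, and the subgroups of type $A_4$, $S_4$, $A_5$ have orders dividing $60$, which is again coprime to $p$ once $p \geq 7$; the exceptional cases $p = 3, 5$ can be handled by direct inspection of $\PSL_2(3) \cong A_4$ and $\PSL_2(5) \cong A_5$, where once more the Borel is the only maximal subgroup of order divisible by $p$. This is the only point at which small-prime care is needed, and is the main (very mild) obstacle.

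Finally, any Borel $B'$ containing $x$ must contain $P$, and since $B'$ has shape $C_p \rtimes C_{(p-1)/2}$ it has a unique, normal Sylow $p$-subgroup; that subgroup must therefore equal $P$, forcing $B' \leq N_G(P)$. Comparing orders yields $B' = N_G(P)$, so the Borel containing $x$ is determined by $x$ alone. Hence $x$ lies in a unique maximal subgroup, and Lemma \ref{uniquemax} forces $A = N_G(P)$, a Borel subgroup.
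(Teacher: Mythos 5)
Your proof is correct and follows essentially the same route as the paper: both arguments show that an element of order \(p\) lies in a unique maximal subgroup, necessarily a Borel, and then conclude via \cref{uniquemax}, with the paper likewise treating \(p=3\) and \(p=5\) as the only cases needing separate inspection. Your Sylow-theoretic justification that the Borel containing \(x\) is unique is merely a more explicit version of what the paper leaves implicit (one small slip: \(\abs{S_4}=24\) does not divide \(60\), but the relevant point --- that \(12\), \(24\) and \(60\) are coprime to \(p\) for \(p\geq 7\) --- still holds).
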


\begin{proof}
	If \(\abs{x} > 5\) then this is clear from the orders of the maximal subgroups of \(G\). If \(\abs{x} = 5 = p\) then the only subgroup which could also contain \(x\) is \(A_5\), but this does not occur in this case. If \(\abs{x} = 3\) then \(G = \PSL_2(3) \cong A_4\) and all elements of order 3 lie in unique maximal subgroups. In all cases, we are done by \cref{uniquemax}.
\end{proof}

\begin{proof}[Proof of \cref{main}]
	First note that if \(\left< A \right> \neq G\) then \(A\) is a maximal subgroup by \cref{subgp}. Otherwise, if \(\left< A \right> = G\) then we have several cases to consider. In the first case, \(A\) may consist entirely of involutions since the group generated by two involutions will always be dihedral and thus is never equal to \(G\). Otherwise, either \(A\) contains some element \(x\) of large order such that \(A \setminus \left< x \right>\) consists entirely of involutions or it does not.

	We first assume the former. If \(\abs{x} \divides p-1\) or \(\abs{x} \divides p+1\) then we refer to Lemmas \cref{main1a,main1b,main2a,main2b}. When \(\abs{x} \divides p\) we clearly have that \(\abs{x} = p\) since \(\abs{x} > 2\). But then \(A\) must be a Borel subgroup by the previous lemma.

	We next consider the case where \(A\) contains multiple elements of order greater than 2 which do not lie in some common cyclic group contained in \(A\). The structure of \(A\) is still determined by the orders of its elements. If the orders of the larger elements do not all simultaneously divide \(p-1\), \(p\) or \(p+1\) then we could have elements of orders \(3\) and \(5\) both lying in some copy of \(A_5\), but this case is encompassed in the arguments used in \cref{main3a,main3b,main4a,main4b,main5} which complete the proof.
\end{proof}

\section{The prime-power case}

We now consider the case of \(G \coloneqq \PSL_2(q_0)\) for \(q_0 = p^n\) a prime power. We have little hope of achieving the same result as before due to the existence of copies of \(\PSL_2(r)\) for \(q_0 = r^s\), \(s\) prime, as a maximal subgroup of \(G\). Indeed, simply trying to use the same methods as before would give us bounds to the order of \(q_0^{\frac{3}{2}}\) for all even \(n\) in the case where the coclique was mostly involutions with a single large order element. We also have an interesting geometry which crops up for even \(n\) due to the fact that \(\PSL_2(q^2) \cong \POmega_4^-(q)\) which we will investigate in this section.

At various points throughout this section, the maximal subgroups of \(G\) may be required, so I shall leave them here for reference.  From \cite{HoltMaximals}, as before, we know that the maximal subgroups are the following (where \(q_0 \coloneqq q^2\)):
\begin{enumerate}[i)]
\item The Borel subgroups, \(E_{q^2} \rtimes C_{\frac{1}{2}(q^2 - 1)}\).
\item \(\PSL_2(q).2\), which splits into two conjugacy classes.
\item \(D_{q_0 - 1}\).
\item \(D_{q_0 + 1}\).
\item \(\PSL_2(q_1)\) where \(q = q_1^r\) for an odd prime \(r\).
\item Two conjugacy classes of \(A_5\) for \(q_0 = p^2\), \(p \equiv \pm 3 \mod 10\) prime.
\end{enumerate}
In terms of their classes in Aschbacher's Theorem, we have that a Borel subgroup is the stabiliser of an isotropic 1-space; the \(\PSL_2(q).2\) is the stabiliser of a non-isotropic 1-space; \(D_{q_0 - 1}\) is the stabiliser of a non-degenerate 2-space and the copies of \(\PSL_2(q)\) are stabilisers of subfields of prime index. We will not need to look into the classes of the other types of maximal subgroup.

\subsection{Other cocliques of large order}

We will now calculate explicitly the bound of order \(q_0^{\frac{3}{2}}\) given above, which is evidence that the previous method may not work as well for this situation. We suppose that \(q > 3\) so that all of the required maximal subgroups exist.

As before, we assume that \(A\) is a maximal coclique which contains an element \(x\) with \(\abs{x} > 2\) such that \(A \setminus \left< x \right>\) consists entirely of involutions. If we suppose that \(x\) lies in \(\PSL_2(q)\) and \(\abs{x} \divides q_0-1\) for \(q_0 = q^r,\) \(r\) an odd prime, then at the very least we may include all of the involutions from this subgroup. We then also note that \(N_G(x)\) acts on the set of copies of \(\PSL_2(q)\) containing \(x\) with orbit length
\[\frac{\abs{N_G(x)}}{\abs{N_H(x)}} = \frac{q_0 - 1}{q \pm 1}\]
where \(H \cong \PSL_2(q)\). Including all of the involutions from these other subgroups too, we count
\[\frac{q_0 - 1}{q \pm 1} \abs{\Inv \PSL_2(q)} = \frac{(q_0 - 1)\abs{\PSL_2(q)}}{(q \pm 1)(q \pm 1)} = \frac{q_0 - 1}{q \pm 1} q (q \mp 1)\]
involutions, but there is some multiple-counting which we must account for.

We now suppose that \(\abs{x} = \frac{1}{2}(q + 1)\), so \(x \in D_{q_0 - 1}\). We then note that \(x\) lies in the intersection of all copies of \(\PSL_2(q)\) above and since \(x\) must lie in some maximal subgroup of \(\PSL_2(q)\), we know it must lie in a unique copy of \(D_{q + 1}\). Then we have that the intersection of any two copies of \(\PSL_2(q)\) containing \(x\) must be either \(\left<x\right>\) or \(D_{q + 1}\), but since \(D_{q + 1}\) normalises \(x\) we must also have that \(D_{q + 1} \subseteq D_{q_0-1} = N_G(x)\), and in fact any such \(D_{q + 1}\) must lie in \(N_{D_{q_0-1}}(N_H(x)) \cong D_{2(q + 1)}\). If we assume the worst possible overcounting, then each one of the \(q + 1\) involutions in this subgroup is counted \(\frac{1}{\abs{x}} (q_0 - 1)\) times. Thus we know that our coclique must have at least 
\[2 \frac{q_0-1}{q + 1} q (q + 1) - 2\frac{q_0-1}{q + 1} (q + 1) = 2 \frac{q_0-1}{q + 1} (q^2 - 1)\]
involutions. We don't know if this coclique is necessarily maximal, but we at least have a lower bound on its order.

So we obtain a bound of order \(q_0^{1 + \frac{1}{r}}\) where \(q_0 = q^r\) and \(r\) is the least odd prime such that this happens and the least bound is obtained when we suppose that \(\abs{x} = \frac{1}{2}(q \pm 1)\). It's possible that one could improve this with a good understanding of how these subfield stabilisers intersect, but the smallest case in which this occurs is \(\PSL_2(3^6)\) and is not easy to compute such things in. Doing the same with \(\PSL_2(q).2\) for \(q_0 = q^2\) gives us the bound of order \(q_0^{\frac{3}{2}}\) mentioned above.

We suspect that a similar result will hold to in the prime case and that the geometric anomaly described below is the only exception to what we had before, but the existence of this example makes trying to use the previous methods very complicated. We think that if a maximal coclique \(A\) is such that \(\abs{A} > O(\sqrt{\abs{G}})\) then it either consists of involutions or is a Borel subgroup, if \(\abs{A} = O(\sqrt{\abs{G}})\) then it should be one of the subfield stabiliser maximal subgroups or the geometric example discussed below, and if it is smaller we can't really say much.

\subsection{Construction}

We first let \(q_0 = q^2\) and consider the action of \(G\) on \(\Fq^4\) as \(\POmega_4^-(q)\). Fix some non-isotropic vector \(v\); to ease notation, we let \(v\) denote the \(\Fq\)-span of \(v\) as well as the vector itself since the distinction is not overly important here. Then \(\Fq^4 = v \oplus v^{\perp}\). We wish to consider the elements of \(G\) which have 2-dimensional eigenspaces lying in \(v^{\perp}\), so we must start by determining how many such subspaces one may have.

We wish to collect all of these elements in order to obtain a large coclique with order cubic in \(q\) since if one takes any two elements \(g,\) \(h\) with 2 dimensional eigenspaces \(V_g,\) \(V_h \subseteq v^{\perp}\) then since \(\dim v^{\perp} = 3\) the intersection of any two 2-dimensional subspaces must be nontrivial. Thus \(\left<g,h\right>\) must be contained in the stabiliser of \(V_g \cap V_h \neq \{0\}\) and so \(\left<g,h\right> \neq G\).

A subspace of dimension 3 over \(\Fq\) has \(\frac{q^3-1}{q-1} = q^2 + q + 1\) subspaces of dimension 2, and as seen in \cite{Wilson} we have that if such a subspace (with symmetric bilinear form having gram matrix \(g\)) has an orthonormal basis (\(\det g\) a square) then it is of \(-\) type for \(q \equiv 3 \mod 4\) and if such a basis does not exist (\(\det g\) is a non-square) then it is of \(+\) type. If a space is of neither \(+\) nor \(-\) type then it is degenerate (\(\det g = 0\)). For \(q \equiv 1 \mod 4,\) the 2-spaces with orthonormal bases are of \(+\) type and to obtain a \(-\) type space we must instead consider a space with basis \(\{e,f\}\) where \((e,e) = 1\) and \((f,f) = \alpha\) for some non-square \(\alpha \in \Fgp\).

Using the fact that the orthogonal group acts transitively on isometric subspaces \cite[{}20.8]{Aschbacher} one sees that a 3-dimensional orthogonal \(\Fq\)-space contains \(q \frac{q-1}{2}\) 2-spaces of \(-\) type, \(q \frac{q+1}{2}\) 2-spaces of \(+\) type and \(q+1\) degenerate 2-spaces. We wish to construct a coclique by taking the elements of \(G\) which have any of these 2-spaces as eigenspaces. The only possible eigenvalues for elements of \(G\) are \(\pm 1\).

\begin{lemma}
	Let \(U \subseteq V\) be a degenerate 2-space, then its pointwise stabiliser in \(G\) is isomorphic to \((\mathbb{F}_q,+)\).
\end{lemma}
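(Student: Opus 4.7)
The plan is to choose a basis of $V$ adapted to $U$, compute the pointwise stabiliser as an explicit matrix group, and then descend from $\O_4^-(q)$ through $\SO$ and $\SOmega$ to $G$.

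First, since $V$ has Witt index $1$, $U$ cannot be totally isotropic, so its radical $U \cap U^{\perp}$ is exactly $1$-dimensional. I would pick an isotropic generator $e$ of the radical and an anisotropic $w \in U \setminus \gen{e}$ (anisotropic because otherwise $U$ would be totally isotropic), then extend to a hyperbolic pair $e, f$ and an anisotropic vector $u \in \gen{e,f}^{\perp}$ with $u \perp w$. This yields an adapted basis $e, f, w, u$ of $V$ with a completely explicit Gram matrix.

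Next I would impose $g(e) = e$ and $g(w) = w$ and expand out the constraints coming from form preservation on the remaining basis vectors. A routine calculation shows that the only freedom is a single parameter $d \in \Fq$ together with a sign $\delta = \pm 1$ arising from $g(u) = \alpha e + \delta u$, and that the remaining entries of the matrix are forced by $d$ and $\delta$ via relations of the shape $\alpha = -d\delta(u,u)$ and $2a = -d^2(u,u)$. The determinant works out to $\delta$, so intersecting with $\SO$ kills the sign and leaves a one-parameter family $\{g_d : d \in \Fq\}$; a quick multiplication check gives $g_{d_1}g_{d_2} = g_{d_1+d_2}$, exhibiting the pointwise stabiliser in $\SO_4^-(q)$ as an isomorphic copy of $(\Fq, +)$.

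To finish, I would verify that this subgroup descends faithfully to $G$. For odd $q$ the quotient $\SO / \SOmega$ has order $2$, and since $(\Fq, +)$ has odd order no nontrivial homomorphism to $C_2$ exists, so the whole subgroup already lies in $\SOmega$; the even $q$ case is handled similarly via the Dickson invariant, which vanishes because $g_d - \Id$ has even rank. The centre of $\SOmega_4^-(q)$ is $\{\pm \Id\}$, and $-\Id$ does not fix the anisotropic vector $w$, so our subgroup meets the centre trivially and the projection to $G = \POmega_4^-(q)$ is injective on it. The main obstacle is just the bookkeeping in the form-preservation calculation that pins down the parametrisation; once that is done, both the group law and the descent to $G$ follow essentially automatically.
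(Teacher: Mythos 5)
Your proof is correct, but it takes a genuinely different route from the paper's. The paper does not compute inside the orthogonal group at all: it invokes the explicit isomorphism between \(\PSL_2(q^2)\) and \(\SOmega_4^-(q)\) from Kleidman--Liebeck, realises the degenerate 2-space as \(\Span_{\Fq}\{u_1, w_i\}\) in the tensor-product model, observes that the pointwise stabiliser must stabilise the radical \(\Fq u_1\) and hence lies in a (linear) Borel subgroup \(E_{q^2} \rtimes C_{\frac{1}{2}(q^2-1)}\) of \(\PSL_2(q^2)\), and then reads off \(E_q\) by a calculation inside that subgroup. You instead stay entirely on the orthogonal side: an adapted basis \(e,f,w,u\), explicit form-preservation equations yielding the one-parameter unipotent family \(g_d\), and a careful descent through \(\SO\) and \(\SOmega\) to \(G\). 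Your version is self-contained and makes explicit the step the paper leaves as ``a straightforward calculation'' (in particular why the whole subgroup lies in \(\SOmega\) and injects into \(\POmega_4^-(q)\)); the paper's version gets the group structure essentially for free from the known structure of Borel subgroups of \(\PSL_2(q^2)\), at the cost of importing the isomorphism. Two small points to tidy: you should choose the hyperbolic partner \(f\) of \(e\) inside \(w^{\perp}\) (possible since \(w^{\perp}\) is a nondegenerate 3-space containing the isotropic vector \(e\)), as your later computations implicitly assume \((f,w)=0\); and your displayed relations (\(2a = -d^2(u,u)\), determinant \(\delta\)) are the odd-characteristic ones, which is fine since this section of the paper works with \(q\) odd throughout, so the aside about the Dickson invariant for even \(q\) can simply be dropped.
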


\begin{proof}
	For this, we refer to \cite[Proposition 2.9.1 (v)]{KleidmanLiebeck} for an explicit isomorphism between \(\PSL_2(q^2)\) and \(\Omega_4^-(q)\). Using the basis \(\{u_1 \coloneqq v_1 \otimes v_1, u_2 \coloneqq v_2 \otimes v_2, w_1 \coloneqq v_1 \otimes v_2 + v_2 \otimes v_2, w_2 \coloneqq \lambda v_1 \otimes v_2 + \bar{\lambda} v_2 \otimes v_1\}\) for \(\bar{\cdot}\) an involutory automorphism of \(\mathbb{F}_{q^2}\) and \(\lambda \in \mathbb{F}_{q^2} \setminus \Fq\) we may now also consider this action from a linear point of view. For one of the \(w_i\), we have that \(\Span_{\Fq} \{ u_1, u_2, w_i\}\) is a 3-space of \(-\) type and contains the degenerate subspace \(W \coloneqq \Span_{\Fq} \{u_1, w_i\}\). The pointwise stabiliser of \(W\) is contained in the stabiliser of its radical, \(\Fq u_1\), and thus lies in a (linear) Borel subgroup. It is then a straightforward calculation using the aforementioned isomorphism to confirm that the pointwise stabiliser of this space is isomorphic to \(E_q\).
\end{proof}

It is a straightforward exercise to check that the sets of elements with given nondegenerate 2-dimensional eigenspaces \(U \subseteq v^{\perp}\) are cyclic of order \(\abs{\SO(U)}\), and one can check this using either the spinor norm (see after \cite[Proposition 2.5.7]{KleidmanLiebeck}) or a calculation similar to that above. In particular, if \(\SO(U) = \gen{h}\) then for \(q \equiv 1 \mod 4\) the sets we are looking for are subgroups generated by \(h^2 \oplus -I_2\) and \(h \oplus -I_2\) for \(U\) of \(+\) type and \(-\) type, respectively. For \(q \equiv 3 \mod 4\) then we instead have \(h \oplus -I_2\) and \(h^2 \oplus -I_2\) (where \(A \oplus B\) is the block-diagonal matrix with \(A\) in the top left and \(B\) in the bottom right).

\begin{lemma}
	The set of all elements with \(2\)-dimensional eigenspaces in \(v^{\perp}\) is a coclique of order \(q^3 + q\).
\end{lemma}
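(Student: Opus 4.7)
My plan is to split the argument into two stages. The coclique property itself has essentially been established in the discussion preceding this subsection: any two $2$-dimensional subspaces of the $3$-dimensional space $v^\perp$ must meet in at least a line, so any two elements of the candidate set jointly stabilise a nontrivial proper subspace of $V$ and cannot generate $G$.

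For the order, I would partition the set according to which $U\subseteq v^\perp$ is the eigenspace and sum. The first step is to show that a non-identity element has a unique such eigenspace. Any $2$-dimensional eigenspace of a lift $\tilde g\in\SOmega_4^-(q)$ of an element $g\in G$ sitting inside $v^\perp$ must be either a $+1$- or a $-1$-eigenspace, since $v^\perp$ (of Witt index $1$) contains no totally singular $2$-plane, so every $2$-plane in it has a non-isotropic vector forcing the corresponding eigenvalue to square to $1$. If $\tilde g$ had two such eigenspaces sharing eigenvalue $\lambda$, their sum would equal $v^\perp$ and the determinant-one condition would force $\tilde g=\lambda I$ on all of $V$, making $g$ trivial in $\POmega$; and if $\tilde g$ had a $+1$- and a $-1$-eigenspace both inside $v^\perp$, they would have to be orthogonal, impossible for two $2$-spaces inside a $3$-space. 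Writing $T$ for the candidate set and $S_U$ for its elements with eigenspace $U$, this shows that the identity is the only element lying in more than one $S_U$, and so $|T|=\sum_U|S_U|-(n-1)$ where $n=q^2+q+1$ is the number of $2$-subspaces of $v^\perp$.

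The rest is bookkeeping. Using the enumeration $q+1$, $q(q+1)/2$, $q(q-1)/2$ of degenerate, $+$-type and $-$-type $2$-subspaces of $v^\perp$ recorded above, together with the previous lemma giving $|S_U|=q$ for degenerate $U$ and the cyclic description of $S_U$ for nondegenerate $U$ from the preceding discussion, I would pin down $|S_U|$ in the nondegenerate case as $|\SO(U^\perp)|$: namely $q+1$ for $+$-type $U$ and $q-1$ for $-$-type $U$, since the ambient $4$-space is of $-$ type. Then
\[\sum_U|S_U| \;=\; (q+1)q + \tfrac{q(q+1)^2}{2} + \tfrac{q(q-1)^2}{2} \;=\; q^2+q+q(q^2+1) \;=\; q^3+q^2+2q,\]
and subtracting $n-1=q^2+q$ yields $|T|=q^3+q$.

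The main obstacle is this last identification of $|S_U|$ for nondegenerate $U$. Working inside $\SOmega_4^-(q)$, an element with $U$ as eigenspace takes the shape $\pm I_U\oplus g_2$ with $g_2\in\SO(U^\perp)$ subject to a spinor-norm condition, and after quotienting by $\{\pm I\}$ to reach $\POmega_4^-(q)$ the two eigenvalue choices for the $U$-scalar pair up. Unwinding this pairing against $q\bmod 4$, with the help of the explicit generators $h^2\oplus -I_2$ and $h\oplus -I_2$ supplied above together with the spinor-norm analysis of \cite[Proposition~2.5.7]{KleidmanLiebeck}, is the fiddly part, but no new idea should be required beyond this.
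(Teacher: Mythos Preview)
Your proposal is correct and follows essentially the same route as the paper: the coclique property via the forced intersection of two \(2\)-planes in \(v^{\perp}\), the uniqueness of the eigenspace for non-identity elements (your case split on equal versus opposite eigenvalues is slightly more explicit than the paper's one-line version), and then the same partition of the set by the type of \(U\) summed against the sizes coming from the preceding lemma and the cyclic description. Your identification \(|S_U|=|\SO(U^{\perp})|\) is exactly what the paper's displayed count uses (the text just before says ``\(|\SO(U)|\)'', but the explicit generators \(h\oplus -I_2\), \(h^2\oplus -I_2\) and the numbers \(q-2\), \(q\), \(q-1\) in the formula make clear that the eigenspace is the \(-I_2\) block, i.e.\ the complement of the \(U\) carrying \(h\)); so your bookkeeping and the paper's agree term-for-term.
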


\begin{proof}
	We have a description of the elements with \(2\)-dimensional eigenspaces in \(v^{\perp}\) in the lemmas above, so it is sufficient to check that there is no overlap.

	Suppose that some element \(g\) has 2 distinct 2-dimensional eigenspaces inside of \(v^{\perp}\). Then clearly \(g\) must have a 3-dimensional 1-eigenspace (namely \(v^{\perp}\)) and so \(g\) must also stabilise \(v\) and since we require \(\det g = 1\) we must have that \(g v = v\) and \(g = I_4\). Thus the intersection of any pair of groups generated by the elements of the coclique of maximal order is trivial. Collecting all of these groups together, we see that we obtain a set of size
	\[(q-2) \frac{q (q-1)}{2} + \frac{q^2(q+1)}{2} + (q-1)(q+1) +1 = q^3 + q\]
	and so, as claimed, we obtain a coclique of order cubic in \(q\).
\end{proof}

\begin{thm} \label{final}
	The coclique \(A\) obtained above is maximal.
\end{thm}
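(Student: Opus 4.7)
The plan is to show that every $g \in G \setminus A$ admits some $h \in A$ with $\langle g, h \rangle = G$; we argue by contradiction, assuming $\langle g, h \rangle \neq G$ for all $h \in A$ and deriving $g \in A$. Recall that $g \in A$ iff $g$ pointwise fixes some 2-dimensional subspace of $v^\perp$, so the goal is to extract such a 2-space for $g$ from the single constraint that $g$ shares a maximal subgroup with every element of $A$.

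I would proceed by geometric case analysis using the maximal subgroups of $G$ listed at the start of this section, together with an explicit description of $M \cap A$ for each class. The key pieces are: the stabiliser $H \coloneqq \Stab_G(\langle v \rangle) \cong \PSL_2(q).2$ contains exactly the $q^2$ involutions $t_U = I_U \oplus (-I_{U^\perp})$ for non-degenerate 2-spaces $U \subseteq v^\perp$; each Borel subgroup $B_\ell$ stabilising an isotropic line $\ell \subseteq v^\perp$ contains exactly the $E_q$ of unipotents fixing the degenerate 2-space $\ell^\perp \cap v^\perp$ pointwise; and each maximal subgroup of type $D_{q^2 \pm 1}$ stabilising a pair $\{U, U^\perp\}$ with $U \subseteq v^\perp$ non-degenerate contains exactly the cyclic contributor $C_U$ at $U$ together with the involution $t_U$. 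The subfield subgroups $\PSL_2(q_1)$ and the sporadic $A_5$'s (when they exist) intersect $A$ in a much smaller, more controllable slice.

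Given these intersection descriptions, the $t_U$'s alone cut out $H$ in the following sense: no single maximal subgroup other than $H$ contains the entire family $\{t_U\}$, so requiring $g$ to share a maximal subgroup with every $t_U$ forces $g \in H$. Having placed $g$ inside $H$, I would then use the unipotent and cyclic contributors of $A$: for each isotropic line $\ell \subseteq v^\perp$, sharing a maximal subgroup with the $E_q$ fixing $\ell^\perp \cap v^\perp$ forces $g$ into $B_\ell$, and running over all $q+1$ such lines together with the constraints coming from the $C_U$'s at every non-degenerate $U \subseteq v^\perp$ pins down the action of $g$ on $v^\perp$. A short geometric argument in $\PSL_2(q)$ (the image of $H$ acting on $v^\perp$) then yields a 2-dimensional pointwise fixed subspace of $g$ inside $v^\perp$, i.e.\ $g \in A$, contradicting the assumption.

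The main obstacle will be controlling the intersection $M \cap A$ when $M$ is of type $D_{q^2 \pm 1}$, a subfield $\PSL_2(q_1)$, or a sporadic $A_5$: each $C_U$ is inherently contained in a $D_{q^2 \pm 1}$, so these dihedral subgroups are a genuine source of non-generation, and one must play different $U$'s off one another using that a single $D_{q^2 \pm 1}$ contains the full cyclic contributor at only one specific $U$. A careful count of how many elements of $A$ each such $M$ can contain, combined with the constraints from $\{t_U\}$ and from the unipotent contributors, should show that no $g \notin A$ can share a maximal subgroup with every element of $A$.
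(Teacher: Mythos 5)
There is a genuine gap, and it sits at the very first load-bearing step. From the assumption that $g$ fails to generate with every element of $A$ you only get, for each involution $t_U$, \emph{some} maximal subgroup $M_U$ containing both $g$ and $t_U$, and these $M_U$ are allowed to vary with $U$. Your justification --- ``no single maximal subgroup other than $H$ contains the entire family $\{t_U\}$'' --- addresses a different question (a single common overgroup of $g$ and of all the $t_U$ simultaneously) and therefore does not yield $g \in H = \Stab_G(\langle v \rangle)$. Worse, the conclusion you want is simply false as an implication about non-generation: a generator $g$ of the cyclic contributor $C_U$ at a non-degenerate $U \subseteq v^{\perp}$ acts as a nontrivial rotation on $U^{\perp}$, which contains $v$ but does not fix $\langle v \rangle$, so $g \notin H$; yet $g \in A$, so $g$ shares a maximal subgroup with every $t_{U'}$ (both stabilise the line $U \cap U'$). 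So ``$g$ shares a maximal subgroup with every $t_U$'' cannot force $g \in H$ without additional input, and the only available additional input is $g \notin A$ --- which your argument never uses at this stage. To make this route work you would have to enumerate \emph{all} maximal overgroups of each individual $t_U$ (these include many dihedral groups, both classes of point stabilisers, subfield subgroups and the sporadic $A_5$'s) and show that the intersection over $U$ of the unions of these families is contained in $H \cup A$; that is precisely the hard content, and it is missing. Since everything after ``having placed $g$ inside $H$'' depends on this step, the proof does not get off the ground.

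For contrast, the paper argues in the opposite direction and avoids intersecting overgroup families altogether: given $g \notin A$, it \emph{constructs} a single witness $h \in A$ of maximal order whose non-degenerate eigenspace avoids every subspace fixed by $g$, so that $\langle g, h \rangle$ stabilises no proper nonzero subspace of $V$ and hence lies in no $\mathcal{C}_1$ subgroup, while the order of $h$ rules out $D_{q^2+1}$, the subfield subgroups and $A_5$; a counting argument ($q^2 > 3(q+1)$ non-degenerate $2$-spaces in $v^{\perp}$) guarantees such an $h$ exists for every eigenspace configuration of $g$. If you want to salvage your contradiction framework, you would still need an argument of this witness-producing kind hidden inside it; the intersection descriptions $M \cap A$ you list, even if all correct, do not substitute for it.
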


We first state a lemma to clean up the end of the proof of this theorem.

\begin{lemma} \label{finallem}
	Suppose \(V\) has a decomposition \(V = V_{\lambda} \oplus V_{\mu} \oplus V_{\nu} \oplus V_{\kappa}\) into distinct (or zero) eigenspaces of \(g \in G\) such that no eigenspace has at least 2-dimensional intersection with \(v^{\perp}\) then there exists some \(h \in A\) such that \(\gen{g, h} = G\).
\end{lemma}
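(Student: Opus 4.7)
The plan is to prove the lemma by contradiction.  Suppose no \(h \in A\) satisfies \(\gen{g,h} = G\); then for each \(h \in A\) the subgroup \(\gen{g,h}\) lies in some maximal subgroup \(M_h\) of \(G\) containing \(g\), whence
\[
    A \subseteq \bigcup_{\substack{M \text{ maximal} \\ g \in M}} (M \cap A).
\]
It suffices to show this union has cardinality strictly less than \(\abs{A} = q^3 + q\), ruling out the cover and producing the required \(h\).

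First I would enumerate the maximal overgroups of \(g\).  The hypothesis forces every eigenspace of \(g\) to have dimension \(0\), \(1\), or \(2\), with any \(2\)-dimensional eigenspace meeting \(v^{\perp}\) in at most a line, so \(g\) is either regular semisimple or an involution of restricted shape.  In either case its centraliser in \(G\) is reductive of low rank, and \(g\) lies in only one or two dihedral subgroups \(D_{q^2 \pm 1}\).  Appealing to the geometric descriptions of the Aschbacher classes listed immediately before the lemma, each \(1\)-dimensional eigenspace of \(g\) (of which there are at most four) contributes at most one Borel (if isotropic) or one copy of \(\PSL_2(q).2\) (if non-isotropic).  Finally, \(g\) lies in \(O(1)\) subfield subgroups \(\PSL_2(q_1)\) and exceptional \(A_5\)s, each of order \(o(q^3)\).

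Next I would bound \(\abs{M \cap A}\) for each such \(M\).  Each \(h \in A\) is specified by its \(2\)-dimensional eigenspace \(U \subseteq v^{\perp}\) (one of \(q^2 + q + 1\), split by type into the counts from the earlier lemma) together with a non-trivial element of the associated cyclic or elementary abelian stabiliser.  A Borel or \(\PSL_2(q).2\) stabilising a fixed \(1\)-space \(\ell \ne \Fq v\) forces \(\ell \subseteq U\) or \(\ell \subseteq U^{\perp}\), restricting \(U\) to an \(O(q)\)-sized family and giving \(\abs{M \cap A} = O(q^2)\).  A dihedral torus contains at most \(q^2 + 1\) involutions, only \(O(q)\) of which have \(2\)-dimensional eigenspaces inside \(v^{\perp}\).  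Subfield and exceptional overgroups contribute \(o(q^3)\) in total.

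The main obstacle is the copy of \(\PSL_2(q).2\) stabilising \(\Fq v\) itself: it has order \(q(q^2-1)\), of the same magnitude as \(\abs{A}\), so naive bounds do not suffice.  For \(h \in A\) to lie in this subgroup, \(h\) must fix \(\Fq v\); writing \(h|_{U^{\perp}} \in \SO(U^{\perp})\), the condition \(h(\Fq v) = \Fq v\) forces \(h|_{U^{\perp}}\) to preserve the non-isotropic line \(\Fq v \subseteq U^{\perp}\), which happens only for \(h|_{U^{\perp}} = \pm I\).  Hence there are at most two such \(h\) per non-degenerate \(U\), and a short extra argument handles degenerate \(U\) similarly, giving \(\abs{\PSL_2(q).2 \cap A} = O(q^2)\).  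Summing, \(\sum_M \abs{M \cap A} = O(q^2) < q^3 + q\) for all sufficiently large \(q\); the finitely many small-\(q\) exceptions can then be verified directly, completing the proof.
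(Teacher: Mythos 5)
Your covering strategy is genuinely different from the paper's proof, which is a direct construction: it picks an element \(h \in A\) of maximal order whose non-degenerate eigenspace \(U \subseteq v^{\perp}\) avoids the at most three lines in which the eigenspaces of \(g\) meet \(v^{\perp}\) (possible because there are \(q^2\) non-degenerate \(2\)-spaces in \(v^{\perp}\) but at most \(3(q+1)\) containing one of those lines, and \(q^2 > 3(q+1)\) for \(q > 3\)), and then observes that \(\gen{g,h}\) stabilises no proper nontrivial subspace. Your contrapositive route could in principle work, but as executed it has a genuine gap in the enumeration of the maximal overgroups of \(g\). The hypothesis does not force \(g\) to be regular semisimple or nearly so: it permits \(g\) to have one or two \(2\)-dimensional eigenspaces, provided each meets \(v^{\perp}\) in only a line (for instance an involution with eigenspaces \(V_+, V_-\), neither contained in \(v^{\perp}\)). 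Such a \(g\) stabilises every \(2\)-space of the form \(\gen{u_+, u_-}\) with \(u_{\pm}\) lines in \(V_{\pm}\), i.e.\ roughly \((q+1)^2\) two-spaces, and hence lies in \(\Theta(q^2)\) maximal dihedral subgroups rather than ``one or two''; likewise it fixes all \(q+1\) lines of each \(2\)-dimensional eigenspace and so lies in \(\Theta(q)\) point stabilisers (Borels and copies of \(\PSL_2(q).2\)), not at most four. Combining your own per-subgroup estimates with the correct counts gives \(\sum_M \abs{M \cap A} = O(q^2)\cdot O(q) + O(q)\cdot O(q^2) = O(q^3)\), which does not beat \(\abs{A} = q^3 + q\); the union bound does not close.

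The underlying reason the paper's argument is shorter is that the only data of \(g\) that matters is the set of at most three lines \(V_{\lambda} \cap v^{\perp}\), and a single well-chosen \(h\) whose eigenspace misses all of them shares no invariant subspace with \(g\). If you wish to keep the covering formulation, you would need a real inclusion--exclusion over the \(g\)-invariant \(2\)-spaces and the elements of \(A\) stabilising each, at which point you are essentially re-deriving the paper's existence statement by a more laborious route. I would recommend either switching to the direct construction or, at minimum, splitting your argument according to the eigenspace dimensions of \(g\) and redoing the overgroup count in the case where a \(2\)-dimensional eigenspace is present.
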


\begin{proof}
	Without loss of generality, we may say that \(V_{\kappa} \cap v^{\perp} = 0\) and the other three eigenspaces have at most 1-dimensional intersection with \(v^{\perp}\). Since an element \(h \in A\) of maximal order fixes some point if and only if it lies in its eigenspace \(U\) (or is one of two isotropic vectors when \(\abs{h} = q-1\)), we may choose any \(h \in A\) of maximal order such that none of the eigenspaces with nontrivial intersection with \(v^{\perp}\) intersect with \(U\) and \(U\) is non-degenerate. We may do this since even in the worst case where \(v^{\perp}\) has a basis of eigenvectors for distinct eigenvalues, there are at most \(3(q+1)\) 2-spaces containing any of these three points, yet \(q^2\) non-degenerate 2-spaces in \(v^{\perp}\) and since we chose \(q > 3\) earlier, \(q^2 > 3(q+1)\). Then \(\left< g, h \right>\) cannot stabilise any proper, nontrivial subspaces of \(V\) and so must be equal to \(G\).
\end{proof}

\begin{proof}[Proof of \cref{final}]
	We now show that the coclique obtained above is indeed maximal by considering the maximal subgroups of \(\POmega_4^-(q)\). In what follows, the term `point' refers to a 1-space.

	This time around, we may ignore \(D_{q^2+1}\) since for \(q > 3\) we have \(q \pm 1 > 2\) and \(r \divides q-1\) and \(r \divides q^2 + 1\) means that \(r \divides (q-1)(q+1) = q^2-1\), thus \(r \divides q^2 + 1 - (q^2 - 1) = 2\). A similar argument holds for \(q+1\), and clearly the only natural number dividing both \(p\) and \(q^2 + 1\) is 1. Then we may ignore \(A_5\) and the subfield stabilisers \(\PSL_2(q)\) by simply choosing the stabilisers of non-degenerate 2-spaces in the argument that follows since these are generated by elements of order \(q \pm 1\) which are not found in \(A_5\) or \(\PSL_2(q)\) for \(q > 3\). if we take an element \(h\) of maximal order in the stabiliser of some non-degenerate subspace and some other \(g \in G\) then we have that \(\left< g,h\right>\) cannot be contained in either \(A_5\) or \(\PSL_2(q)\) as these groups do not have elements of sufficiently large orders.

	We note that the remaining three maximal subgroups are all of class \(\mathcal{C}_1\) and so represent the stabilisers of singular or non-singular subspaces. We thus look at how the stabilisers of various subspaces of \(\Fq^4\) interact with \(A\). We first note that the dihedral subgroups \(D_{q^2 - 1}\) represent the stabilisers of non-degenerate 2-dimensional subspaces (as the stabiliser of a degenerate one lies in the point stabiliser of its radical) and recall that the Borel subgroups and \(\PSL_2(q).2\) are the stabilisers of isotropic and non-isotropic 1-spaces, respectively.

	We also note that an element \(h \in A\) of maximal order stabilising a non-degenerate 2-space \(U \subseteq v^{\perp}\) will stabilise some point if and only if this point lies inside \(U\) or \(\abs{h} = q-1\) and this point is one of two isotropic points in \(U^{\perp}\). We first consider the case where \(g \in G\) stabilises some non-degenerate 2-space. If \(g\) stabilises \(V \subseteq v^{\perp}\) either \(V\) is an eigenspace of \(g\) or there is some 1-space \(\Fq u \subseteq V\) not fixed by \(g\). If there is one such point then there are many others, since if \(g\) fixes more than 2 non-isotropic points in the same 2-space then this space would have to be an eigenspace for \(g\), so we may choose some \(u \in V\) not fixed by \(g\) and a corresponding \(x \in V^{\perp}\) also not fixed by \(g\) such that \(\Span_{\Fq}\{u,x\}\) is non-degenerate. Then the element of \(L\) of maximal order with eigenspace \((\Span_{\Fq} \{u,x\})^{\perp}\) will be such that \(\left<g,h\right>\) stabilises no proper nontrivial subspace of \(\Fq^4\) and so must be equal to \(G\).

	Next, we consider the case where \(g\) stabilises some degenerate 2-space \(V \subseteq v^{\perp}\). Either \(V\) is an eigenspace for \(g\) or there is a (non-isotropic) 1-space \(\Fq u \subseteq V\) not fixed by \(g\). Then, as in the non-degenerate case, we may pick some non-isotropic vector \(x \in V^{\perp}\) such that \(\Span_{\Fq} \{u,x\}\) is non-degenerate and an element \(h \in A\) of maximal order with eigenspace \((\Span_{\Fq} \{u,x\})^{\perp}\) such that \(\left< g, h \right> = G\).

	Otherwise, if \(g\) has a 2-dimensional degenerate eigenspace inside of \(v^{\perp}\) then either the corresponding eigenvalue is 1 and so \(g \in A\) or it is \(-1\) and a direct computation shows us that some power of such an element would be \(-I_4 \nin G\), thus no such element exists.

	We are now left with a number of simpler cases corresponding to the possible eigenspaces of a general element \(g \in G\). If \(g\) has an eigenspace of dimension at least 3 then its intersection with \(v^{\perp}\) must be at least 2-dimensional and so \(g \in A\). Otherwise, \(g\) must have eigenspaces of dimension at most \(2\) and so we know that either it lies in \(A\) or will generate \(G\) along with some element of \(A\) by \cref{finallem}. Maximality of \(A\) follows.
\end{proof}

\newpage

\end{document}